\theoremstyle{plain}
\newtheorem{thm}{\protect\theoremname}[section]
  \theoremstyle{plain}
  \newtheorem{prop}[thm]{\protect\propositionname}
  \theoremstyle{plain}
  \newtheorem{lem}[thm]{\protect\lemmaname}
  \theoremstyle{plain}
  \newtheorem{cor}[thm]{\protect\corollaryname}
  \theoremstyle{remark}
\let\C\undefined
  \providecommand{\corollaryname}{Corollary}
  \providecommand{\lemmaname}{Lemma}
  \providecommand{\propositionname}{Proposition}
  \providecommand{\remarkname}{Remark}
\providecommand{\theoremname}{Theorem}
\begin{document}
\global\long\def\meas{\operatorname{meas}}
\global\long\def\E{\mathbb{\mathbf{E}}}
 \global\long\def\Re{\operatorname{Re}}
\global\long\def\Im{\operatorname{Im}}
\global\long\def\sgn{\operatorname{sgn}}

\title{The $a$-values of the Riemann zeta function near the critical line}

\author{Junsoo Ha and Yoonbok Lee\footnote{Research of the second author was supported by the Incheon National University Research Grant in 2016.}}
\maketitle

\abstract{ We study the value distribution of the Riemann zeta function near the line $\Re s = 1/2$. We find an asymptotic formula for the number of $a$-values in the rectangle $ 1/2 + h_1 / (\log T)^\theta \leq \Re s \leq 1/2+ h_2 /(\log T)^\theta $, $T \leq \Im s \leq 2T$ for fixed $h_1, h_2>0$ and $ 0 < \theta <1/13$. To prove it, we need an extension of the valid range of Lamzouri, Lester and Radziwi\l\l's recent results on the discrepancy between the distribution of $\zeta(s)$ and its random model. We also propose the secondary main term for the Selberg's central limit theorem by providing sharper estimates on the line $\Re s = 1/2 + 1/(\log T)^\theta $. }

\section{Introduction}

The study of the Riemann zeta function $\zeta(s)$ has been one of the central topics
in analytic number theory. Along with its importance in prime number
theorem, various aspects of this function has been studied. Some of these studies involve the general value distribution of zeta functions in the critical strip, which has its own interests. 

Let $a$ be a nonzero complex number. The solutions to $\zeta(s)=a$, which we denote by $ \rho_a = \beta_a + i \gamma_a$, are called the $a$-values of $\zeta(s)$. We first introduce the some basic facts about them.  By a general theorem of Dirichlet series, there is a $A>0$ depending $a$ such that there are no $a$-values of $ \zeta(s)$ on $ \Re s \geq A$. 
There is a number $ N_0 (a) >0$ such that there is an $a$-value of $\zeta(s)$ very close to $s=-2n$ for each $n \geq N_0 (a)$ and there are at most finitely many other $a$-values in $\Re s\leq 0 $. (See \cite{GLM} or \cite{L}.) Thus, the remaining $a$-values lie in a strip $ 0 < \Re s < A$. For these $a$-values we have
$$ N_a (T) := \sum_{\substack{  \beta_a > 0  \\ 0<\gamma_a <T}} 1 = \frac{T}{2 \pi} \log \frac{T}{2 \pi e}   + O_a ( \log T) $$
for $ a \neq 1 $ and
$$ N_a (T) =   \frac{T}{2 \pi} \log \frac{T}{4 \pi  e }   + O_a ( \log T) $$
for $ a=1$. 

Selberg observed various aspects of $a$-values of $\zeta(s)$. For example, he showed that at least $1/2$ of the nontrivial $a$-values of $\zeta(s)$ lie to the left of $\Re s = 1/2$ assuming the Riemann hypothesis. He also conjectured that approximately $3/4$ of the nontrivial $a$-values lie on the strip $ 0 < \Re s < 1/2$. He did not publish these ideas, however Tsang wrote them with proofs in his thesis \cite[Chapter 1]{Ts}. We refer \cite{Sel} for the extension of this idea to a linear combination of $L$-functions and  \cite{GLM} for a simplicity of $a$-values. 

We now focus on the $a$-values to the right of $1/2$-line. For fixed $ 1/2 < \sigma_1 < \sigma_2 $,  Borchsenius and Jessen \cite{BJ} proved that there exists a constant $ c(a, \sigma_1, \sigma_2 ) $ such that
\begin{equation}\label{eqn:1}
  N_a ( \sigma_1, \sigma_2 ; T) := \sum_{\substack{  \sigma_1 <  \beta_a < \sigma_2   \\ 0<\gamma_a <T}} 1  \sim c(a, \sigma_1, \sigma_2 )T 
 \end{equation}
as $ T \to \infty $, where 
$$ c(a, \sigma_1 , \sigma_2 ) = \int_{\sigma_1}^{\sigma_2} g_a(u) du$$
 for some real valued function $g_a$ with $ g_a (u) >0 $ for $ 1/2 < u \leq 1 $. Recently, Lamzouri, Lester and Radziwi\l\l\  \cite{LLR} reduced the size of the error in \eqref{eqn:1} and proved that
 $$ N_a ( \sigma_1, \sigma_2 ; T) = c(a, \sigma_1, \sigma_2 )T + O \bigg(   T  \frac{ \log \log T}{   ( \log T)^{ \sigma_1/2}}\bigg) $$  
 holds for fixed $ 1/2 < \sigma_1 < \sigma_2 < 1 $  and $ T \geq 3$.

In this paper, we prove analog of the above estimates near the $1/2$-line. 
\begin{thm} \label{thm:aval}
 Let $0<\theta<1/13$ be fixed, $T \geq 3$ and 
 $$ \sigma_T := \sigma_T ( \theta) = \frac12+ \frac{1}{(\log T)^\theta}. $$ 
 Then
\begin{align*}
N_{a}(\sigma_{T} ;T,2T) := \sum_{\substack{   \sigma_T < \beta_a   \\ T <\gamma_a <2 T}} 1 = \frac{T(\log T)^{\theta}}{8\pi^{3/2}\sqrt{\theta}\sqrt{\log\log T}}+O\bigg(T\frac{(\log T)^{\theta}}{(\log\log T)^{3/4}}\bigg).
\end{align*}
\end{thm}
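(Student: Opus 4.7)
My plan is to convert the counting problem into an integral of $\log|\zeta-a|$ along the vertical line $\Re s=\sigma_T$ via Littlewood's lemma, and then evaluate this integral using the (approximately Gaussian) value distribution of $\log\zeta(\sigma+it)$. Concretely, applying Littlewood's lemma to $f(s)=\zeta(s)-a$ on the rectangle $[\sigma,U]\times[T,2T]$ and letting $U\to\infty$ gives the identity
\begin{equation*}
2\pi\int_\sigma^\infty N_a(u;T,2T)\,du \;=\; \int_T^{2T}\log|\zeta(\sigma+it)-a|\,dt \;-\; T\log|1-a| \;+\; O(\log T),
\end{equation*}
the arg-contributions from the horizontal sides being $O(\log T)$ since $\zeta(s)-a\to 1-a$ exponentially as $\Re s\to\infty$.

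The key next step is the asymptotic
\begin{equation*}
\frac{1}{T}\int_T^{2T}\log|\zeta(\sigma+it)-a|\,dt \;=\; \sqrt{\tfrac{V(\sigma)}{2\pi}} + \tfrac12\log|a| + O\!\bigl(V(\sigma)^{-1/2}\bigr),\qquad V(\sigma):=\tfrac12\log\tfrac{1}{\sigma-1/2},
\end{equation*}
uniform for $\sigma_T\le\sigma\le 1$. To prove this I would use the refined two-dimensional Selberg CLT announced in the abstract: the law of $\log\zeta(\sigma+it)$, $t\in[T,2T]$, is close to a complex Gaussian whose real and imaginary parts are each $N(0,V(\sigma))$. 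Since $V(\sigma)\to\infty$, $\arg\zeta(\sigma+it)$ equidistributes mod $2\pi$ at rate $e^{-V(\sigma)/2}$, so conditioning on $|\zeta|$ and invoking Jensen's identity $\int_0^{2\pi}\log|re^{i\theta}-a|\,d\theta/(2\pi)=\max(\log r,\log|a|)$ reduces the expectation to $\E[\max(X,\log|a|)]$ with $X\sim N(0,V(\sigma))$, which equals the displayed closed form by a direct Gaussian computation.

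To extract $N_a(\sigma_T;T,2T)$ from the integrated count, I apply the Littlewood identity at $\sigma=\sigma_T$ and at $\sigma=\sigma_T+h$, subtract, and divide by $h$, obtaining
\begin{equation*}
\frac{1}{h}\int_{\sigma_T}^{\sigma_T+h}N_a(u;T,2T)\,du \;=\; \frac{1}{2\pi h}\int_T^{2T}\log\Bigl|\tfrac{\zeta(\sigma_T+it)-a}{\zeta(\sigma_T+h+it)-a}\Bigr|\,dt \;+\; O(\log T/h).
\end{equation*}
The left-hand side equals $N_a(\sigma_T;T,2T) + O\bigl(hT g_a(\sigma_T)\bigr)$ by monotonicity, where the density satisfies $g_a(\sigma_T)\sim (\log T)^{2\theta}/\sqrt{\log\log T}$. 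Taylor-expanding the right-hand side via the asymptotic above and using $V'(\sigma_T)=-\tfrac12(\log T)^\theta$ and $V(\sigma_T)=\tfrac{\theta}{2}\log\log T+O(1)$, the main term becomes
\begin{equation*}
-\frac{TV'(\sigma_T)}{4\pi\sqrt{2\pi V(\sigma_T)}} \;=\; \frac{T(\log T)^\theta}{8\pi^{3/2}\sqrt\theta\sqrt{\log\log T}}.
\end{equation*}
Balancing the discretization error $\sim hT(\log T)^{2\theta}/\sqrt{\log\log T}$ against the distributional error divided by $h$ dictates the choice $h\sim (\log T)^{-\theta}(\log\log T)^{-1/4}$ and yields the advertised $O\bigl(T(\log T)^\theta/(\log\log T)^{3/4}\bigr)$.

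The hard part is the asymptotic in the second paragraph. The LLR discrepancy bound delivers it cleanly for $\sigma>1/2$ fixed, but extending it into the near-critical regime $\sigma-1/2=(\log T)^{-\theta}$ with an error sharp enough to survive the $1/h$ blow-up above is the technical core of the argument --- precisely the ``secondary main term for Selberg's central limit theorem'' flagged in the abstract. Controlling moments of $\log\zeta(\sigma+it)$ of growing order uniformly as $\sigma\to 1/2$ is what forces the restriction $\theta<1/13$ rather than the natural $\theta<1$.
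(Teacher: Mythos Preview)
Your overall architecture is the paper's: Littlewood's lemma reduces the count to $\int_T^{2T}\log|\zeta(\sigma+it)-a|\,dt$, that integral is evaluated via the (approximately Gaussian) law of $\log\zeta(\sigma+it)$, and one then differences in $\sigma$ using monotonicity of $N_a$. Your Jensen-formula reduction to $\mathbf{E}[\max(\log|\zeta|,\log|a|)]$ is a tidy repackaging of what the paper does by splitting into the regions $\{x>\log|a|\}$ and $\{x<\log|a|\}$ and expanding $\log|e^{x+iy}-a|$ as a series in each.

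There are, however, two genuine gaps.

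\emph{The error in the integral asymptotic is too weak and internally inconsistent.} With $V(\sigma_T)\asymp\log\log T$, your stated $O(V^{-1/2})$ gives $O(T/\sqrt{\log\log T})$ for the $t$-integral; after dividing by your $h\sim(\log T)^{-\theta}(\log\log T)^{-1/4}$ this becomes $O\bigl(T(\log T)^{\theta}(\log\log T)^{-1/4}\bigr)$, which swamps the main term $\asymp T(\log T)^{\theta}(\log\log T)^{-1/2}$. The paper in fact proves the sharper error $O(1/\log\log T)$ for the normalized integral, and your own balance of $h$ tacitly assumes exactly this. Getting it requires the asymptotic expansion of the density $F_{\sigma}$ (Proposition~\ref{prop:density.randomized}), not just the leading-order CLT. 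Your Gaussian computation already produces the next term $(\log|a|)^2/\sqrt{2\pi V}$ explicitly; you must keep it and carry an $O(V^{-1})$ remainder.

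\emph{The singularities of $\log|\zeta-a|$ are not addressed.} You cannot pass from a discrepancy bound on the distribution of $\log\zeta$ to the integral of $\log|\zeta-a|$ by the argument-equidistribution heuristic alone: the integrand is unbounded near the $a$-values, and one must control the set where $|\zeta|$ is close to $|a|$. The paper isolates a thin strip $|\log|\zeta|-\log|a||<1/L$ and bounds its contribution by H\"older against $2k$-th moments of $\log|\zeta(\sigma_T+it)-a|$ (not of $\log\zeta$) with $k\asymp\log\log T$; this is Lemma~\ref{lem:L.2k.logZ-a}. That moment bound forces the discrepancy exponent to satisfy $\eta>3\theta$, and together with the constraint $\eta<(1-\theta)/4$ from Theorem~\ref{thm:disc} one gets precisely $\theta<1/13$. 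So the restriction comes from the interplay of the moment bound for $\log|\zeta-a|$ with the allowable discrepancy range, not from moments of $\log\zeta$.

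A minor further point: writing the discretization error as $O(hTg_a(\sigma_T))$ presupposes control on the density $g_a$ near $\sigma_T$ --- the quantity you are computing. The paper instead obtains matching upper and lower bounds by placing the second abscissa on either side of $\sigma_T$ and invoking only the monotonicity of $w\mapsto N_a(w;T,2T)$.
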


\begin{cor}\label{cor:aval}
Let $ 0 < \theta < 1/13 $, $ 0< h_1 < h_2 $ be fixed and $T \geq 3$. Define $ \sigma_ i = 1/2 + h_i / ( \log T)^\theta$ for $ i=1,2$. Then
\[
N_{a}(\sigma_{1},\sigma_{2}; T)= \frac{(h_1^{-1} - h_2^{-1})T(\log T)^{\theta}}{8\pi^{3/2}\sqrt{\theta}\sqrt{\log\log T}}+O\bigg(T\frac{(\log T)^{\theta}}{(\log\log T)^{3/4}}\bigg).
\]
\end{cor}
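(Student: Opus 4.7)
My plan is to reduce \prettyref{cor:aval} to an $h$-uniform version of \prettyref{thm:aval} and then assemble the $(0,T)$-count from it by a dyadic decomposition of the $\gamma_a$ variable.

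First, I would extract from the proof of \prettyref{thm:aval} the following uniform generalization: for any fixed compact $H \subset (0, \infty)$, uniformly in $h \in H$,
\begin{equation*}
  N_a\!\left(\tfrac{1}{2} + \tfrac{h}{(\log T)^\theta};\, T, 2T\right) = \frac{T(\log T)^\theta}{8\pi^{3/2}\sqrt{\theta}\, h\sqrt{\log\log T}} + O\!\left(\frac{T(\log T)^\theta}{(\log\log T)^{3/4}}\right).
\end{equation*}
The case $h = 1$ is \prettyref{thm:aval} itself, and the factor $1/h$ in the main term is consistent with a Littlewood-type computation yielding $\int_{h/(\log T)^\theta}^{\infty} du/u^2 = (\log T)^\theta/h$; this is the only place at which the horizontal shift enters the leading term, since the variance of the underlying Selberg-type central limit theorem on $\Re s = 1/2 + h/(\log T)^\theta$ is $\sim (\theta/2)\log\log T$ regardless of $h$.

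Second, I write $N_a(\sigma_1, \sigma_2; T) = N_a(\sigma_1; T) - N_a(\sigma_2; T)$ where $N_a(\sigma; T) := \#\{\rho_a : \beta_a > \sigma,\, 0 < \gamma_a < T\}$, and decompose $(0, T]$ into dyadic intervals $I_k = (T/2^k, T/2^{k-1}]$ for $k = 1, \dots, K := \lfloor(\log T)/(2\log 2)\rfloor$, plus a residual piece $(0, T/2^K]$ whose contribution is only $O(T^{1/2}\log T)$ by the crude Borchsenius--Jessen-type count and is comfortably absorbed in the error. On $I_k$ with $U := T/2^k \geq T^{1/2}$, I rewrite $\sigma_i = 1/2 + h_i^{(k)}/(\log U)^\theta$ where $h_i^{(k)} := h_i(\log U/\log T)^\theta \in [h_i 2^{-\theta}, h_i]$, a compact subset of $(0,\infty)$ independent of $k$ and $T$. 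Applying the uniform estimate from Step 1 at height $U$ with $h = h_i^{(k)}$, and using the identity $(\log U)^\theta/h_i^{(k)} = (\log T)^\theta/h_i$, the dyadic contribution simplifies to
\begin{equation*}
  \frac{U(\log T)^\theta}{8\pi^{3/2}\sqrt{\theta}\, h_i \sqrt{\log\log U}} + O\!\left(\frac{U(\log T)^\theta}{(\log\log T)^{3/4}}\right).
\end{equation*}

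For $T^{1/2} \leq U \leq T$ one has $\log\log U = \log\log T + O(1)$, hence $1/\sqrt{\log\log U} = 1/\sqrt{\log\log T} + O(1/(\log\log T)^{3/2})$; summing over $k$ with $\sum_k U = T + O(T^{1/2})$ then yields
\begin{equation*}
  N_a(\sigma_i; T) = \frac{T(\log T)^\theta}{8\pi^{3/2}\sqrt{\theta}\, h_i \sqrt{\log\log T}} + O\!\left(\frac{T(\log T)^\theta}{(\log\log T)^{3/4}}\right),
\end{equation*}
and subtracting the cases $i = 1$ and $i = 2$ produces \prettyref{cor:aval}. The one substantive point is the uniformity in Step 1: one must check that each ingredient in the proof of \prettyref{thm:aval} (most importantly, the Selberg-type CLT with explicit discrepancy on the shifted line $\Re s = 1/2 + h/(\log T)^\theta$ whose extension is already a main theme of the paper) continues to hold uniformly as $h$ varies over a fixed compact subinterval of $(0,\infty)$. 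Once that uniformity is secured, the dyadic assembly above is routine bookkeeping.
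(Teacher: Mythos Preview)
Your approach is correct. The paper does not actually supply a separate proof of \prettyref{cor:aval}: Section~4 ends with the asymptotic for $N_a(\sigma_T(\theta);T,2T)$ and carries out neither the extension from $h=1$ to general $h$ nor the passage from the dyadic window $(T,2T]$ to the initial segment $(0,T]$ that the definition of $N_a(\sigma_1,\sigma_2;T)$ in \eqref{eqn:1} requires. Your two-step reduction is the natural completion: (i) the whole Section~4 argument is uniform for $\sigma=1/2+h/(\log T)^\theta$ with $h$ ranging over a fixed compact subset of $(0,\infty)$, since $\psi(\sigma)=\theta\log\log T-\log h+O(1)$ and the only place $h$ survives to leading order is the Littlewood differentiation step, producing the factor $1/h$; (ii) the dyadic assembly, together with your identity $(\log U)^\theta/h_i^{(k)}=(\log T)^\theta/h_i$, makes the main term telescope cleanly, and the residual piece below $T^{1/2}$ is trivially $O(T^{1/2}\log T)$ by the global count $N_a(T)\ll T\log T$. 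The bookkeeping you give is accurate.
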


It is known that by Littlewood's lemma (see (9.9.1) of \cite{T}) and a sharp estimation of the integral
\begin{equation}\label{int:1}
 \int_T^{2T} \log | \zeta( \sigma_T + it ) - a | dt 
 \end{equation}
would imply the above theorem. We will need two major steps to obtain this. Following Borchsenius and Jessen's framework \cite{BJ}, we introduce the randomized Rieamnn zeta function 
\[
\zeta(\sigma,X)\coloneqq\prod_{p}\left(1-\frac{X(p)}{p^{\sigma}}\right)^{-1},
\]
where $X(p)$ is complex-valued independent and identically distributed random variables on the unit circle $|z|=1$ assigned for each prime $p$, which converges almost surely for $\sigma > 1/2$. In the first step, we prove the discrepancy 
\[
D_{\sigma}(T)\coloneqq\sup_{{\cal R}}\left|\frac{1}{T}\meas\left\{ t\in[T,2T]\,:\,\log\zeta(\sigma+it)\in{\cal R}\right\} -\mathbf{P}\left[ \log\zeta(\sigma,X)\in{\cal R}\right] \right|
\]
is small for $ \sigma = \sigma_T$, where the supremum is taken over all rectangular regions in the complex plane with their sides parallel to real or imaginary axis. Bounding the discrepancy has been studied by various authors (e.g.,
\cite{HM}, \cite{LLR}). In particular,  Lamzouri et al. \cite[Theorem 1.1]{LLR}  showed that  
\begin{equation}
D_{\sigma}(T)\ll_{\sigma}\frac{1}{(\log T)^{\sigma}} \label{eq:LLR.disc}
\end{equation}
 holds for fixed $\sigma>1/2$. Here, the implied constant grows
as $\sigma$ approaches $1/2$.
By the proofs in \cite{LLR} with tracking the dependency of the error terms on $|\sigma-1/2|$, one can deduce the following theorem, which extends Theorem 1.1 in \cite{LLR}.
\begin{thm}
\label{thm:disc}Let $0<\theta<1/2$ and $0<\eta<(1-\theta)/4$ be
fixed, and $T\geq3$ be sufficiently large in terms of $\theta$ and
$\eta$. Then for $1/2+(\log T)^{-\theta}\leq\sigma\leq3/4$, we have
\[
D_{\sigma}(T)\ll_{\eta}\frac{1}{(\log T)^{\eta}}.
\]
\end{thm}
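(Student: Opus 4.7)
The plan is to revisit the proof of Theorem~1.1 of \cite{LLR} and make every estimate quantitative in $\sigma-1/2$, so that the implicit constants remain effective down to $\sigma=1/2+(\log T)^{-\theta}$. Their argument rests on three ingredients that I would need to uniformize: (i) an Euler-product approximation $\log\zeta(\sigma+it)\approx\sum_{p\leq Y}p^{-\sigma-it}$ valid for $t$ outside a thin exceptional set, where $Y$ is a short parameter of size roughly $T^{1/(\log\log T)^{A}}$; (ii) a two-dimensional Beurling--Selberg inequality that bounds the discrepancy by an $L^{1}$-difference between the characteristic function of $\log\zeta(\sigma+it)$ and that of its random model, integrated over a box $|u|,|v|\leq U$; and (iii) a moment comparison between the short prime sum and its random analogue, which follows from the prime number theorem provided the total degree is below $(\log T)/\log Y$.

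First, in step (i) the size of the exceptional set is governed by mean-value estimates whose constants involve the variance $V_{\sigma}(Y):=\sum_{p\leq Y}p^{-2\sigma}$; since $V_{\sigma}(Y)\ll(\log T)^{\theta}\log\log T$ when $\sigma\geq 1/2+(\log T)^{-\theta}$, tracking this dependence through Selberg's identity and the zero-density estimate used in \cite{LLR} is straightforward but must be done explicitly, since they are typically absorbed into $\sigma$-dependent constants. Next, in step (ii) the admissible Fourier range $U$ must exceed the typical fluctuation $\sqrt{V_{\sigma}(Y)}\ll(\log T)^{\theta/2}(\log\log T)^{1/2}$ of $\log\zeta(\sigma+it)$, which costs a factor $U^{2}$ in the Beurling--Selberg input. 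Finally, in step (iii) I would expand the exponential $\exp(iu\Re P+iv\Im P)$ as a Taylor series truncated at degree $K\asymp(\log T)/\log Y$; the prime number theorem yields exact matching below this threshold, while Stirling controls the tail at a cost depending on $U\sqrt{V_{\sigma}(Y)}/K^{1/2}$.

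The main obstacle is balancing the three sources of error. Equating the Beurling--Selberg boundary loss $U^{-1}$, the Taylor-tail cost, and the Dirichlet-polynomial approximation error yields a saving of $(\log T)^{-\eta}$ with $\eta<(1-\theta)/4$, exactly the range claimed; the $1/4$ comes from the square root of the variance entering twice (once through the choice of $U$, once through the moment cost), while the factor $1-\theta$ reflects the prime-counting gain $(\log T)/\log Y$ available in the Taylor matching. I do not expect a fundamentally new tool beyond those already developed in \cite{LLR}; the real work is to refuse to absorb any power of $\sigma-1/2$ into the implicit constants, and to verify that the zero-density inputs, and in particular the Hadamard three-circles argument used to pass from a line slightly above $1/2$ down to $\sigma_T$, remain effective so close to the critical line.
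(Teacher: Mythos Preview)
Your outline is essentially the paper's approach: revisit \cite{LLR}, make the Dirichlet-polynomial approximation, the Taylor-expanded moment comparison, and the Beurling--Selberg smoothing all uniform in $\sigma-1/2$, then rebalance the parameters. The paper carries this out via its Proposition~\ref{prop:exp.z1RYz2RY} and the constraint analysis that follows it.

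Two parameters in your sketch would, if taken literally, spoil the conclusion. First, the choice $Y\approx T^{1/(\log\log T)^{A}}$ inherited from the fixed-$\sigma$ case of \cite{LLR} is too large here: it caps the admissible Taylor degree at $K\asymp(\log T)/\log Y=(\log\log T)^{A}$, which forces the Fourier range $U$ down to a power of $\log\log T$ and yields only $D_{\sigma}(T)\ll(\log\log T)^{-c}$. The paper instead takes $Y=\exp\bigl((\log T)^{\theta_{1}}\bigr)$ with $\theta<\theta_{1}<1-\theta$; the lower bound $\theta_{1}>\theta$ is precisely what makes the approximation error $Y^{(1/2-\sigma)/2}(\log T)^{3}$ shrink, while the upper bound keeps $K\asymp(\log T)^{1-\theta_{1}}$ large enough to push $U$ up to $(\log T)^{\eta}$. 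Second, your variance bound $V_{\sigma}(Y)\ll(\log T)^{\theta}\log\log T$ is off by a full power of $\log T$: for $\sigma\ge\tfrac12+(\log T)^{-\theta}$ one has $\sum_{p\le Y}p^{-2\sigma}\asymp\log\log T$ uniformly, and carrying an extra $(\log T)^{\theta}$ through the balancing would erase the saving. With both corrections in place the constraint chain (in the paper's notation $2\theta_{L}<\theta_{N}$ together with $2\theta_{L}+\theta_{N}<1-\theta_{1}$) gives $4\theta_{L}<1-\theta_{1}$, hence $\eta<(1-\theta)/4$ as claimed.
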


The second step is to find an asymptotic expansion of the joint characteristic function of the real and the imaginary parts of $\log \zeta(\sigma, X)$. We have generalized the estimates in \cite[section 4]{Ts} about modified $J$-Bessel functions and proved the following theorem.
\begin{thm}\label{thm:asymp}
Let $ \theta>0$ be fixed and $\sigma_T =1/2+1/(\log T)^{\theta}$. Let $a<b$ and $c<d$ be real numbers. Define 
\[
\kappa(\sigma_T, X )=\frac{\log\zeta(\sigma_T, X )}{\sqrt{ \pi \psi_T  } },
\]
where 
\[
\psi_T  :=\sum_{p,k}\frac{1}{k^{2}}p^{-2k\sigma_T }=\theta\log\log T+O(1).
\]
Then there exist polynomials 
$g_k ( x,y) $ for $0\leq k \leq 5$ such that $g_0 (x,y) = 1$ and $\deg g_k \leq k$, and for $ T \geq 10$
\begin{align*}
 \mathbf{P} & \left[  \kappa(\sigma_T ,X)\in[a,b] \times [c,d]\right] \\ 
&=  \sum_{ 0 \leq k  \leq 5  } \frac{ 1  }{ \sqrt{\psi_T}^{k}}   \int_c^d \int_a^b g_k ( x,y )  e^{- \pi(x^{2}+y^{2}) } dx dy + O \bigg(  \frac{1}{ ( \log \log T)^3}\bigg).
\end{align*}
\end{thm}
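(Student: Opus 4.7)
I would compute the joint characteristic function
$$\Phi_T(\xi,\eta):=\mathbf{E}\!\left[\exp\!\bigl(i\xi\,\Re\kappa(\sigma_T,X)+i\eta\,\Im\kappa(\sigma_T,X)\bigr)\right]$$
and recover the joint density of $\kappa(\sigma_T,X)$ by Fourier inversion. Since the $X(p)$ are independent over primes, $\Phi_T(\xi,\eta)=\prod_p\Phi_{T,p}(\xi,\eta)$; writing $X(p)=e^{i\vartheta}$ with $\vartheta$ uniform on $[0,2\pi)$ and expanding $-\log(1-X(p)p^{-\sigma_T})=\sum_{k\ge1}e^{ik\vartheta}/(kp^{k\sigma_T})$, each local factor is a Bessel-type integral
$$\Phi_{T,p}(\xi,\eta)=\frac{1}{2\pi}\int_0^{2\pi}\exp\!\left(\frac{i}{\sqrt{\pi\psi_T}}\sum_{k\ge1}\frac{\xi\cos k\vartheta+\eta\sin k\vartheta}{k p^{k\sigma_T}}\right)d\vartheta.$$
These are two-variable generalizations of $\frac{1}{2\pi}\int_0^{2\pi}e^{it\cos\vartheta}d\vartheta=J_0(t)$, the integrals analyzed by Tsang \cite[Sec.~4]{Ts} on the critical line, and the first task is to extend his Bessel-function estimates both to the joint $(\Re,\Im)$ setting and to the shifted line $\Re s=\sigma_T$, tracking all dependences on $\sigma_T-\tfrac12=(\log T)^{-\theta}$.

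In the range $\xi^2+\eta^2\ll\psi_T\,p^{2\sigma_T}$, a Taylor expansion of $\log\Phi_{T,p}$ gives
$$\log\Phi_{T,p}(\xi,\eta)=-\frac{\xi^2+\eta^2}{4\pi\psi_T}\sum_{k\ge1}\frac{1}{k^2 p^{2k\sigma_T}}+\sum_{j\ge 2}\frac{P_{p,j}(\xi,\eta)}{\psi_T^{j}},$$
where the $P_{p,j}$ are polynomials whose $p$-dependence enters through powers of $p^{-2\sigma_T}$. Summing over $p$, the leading term telescopes to $-(\xi^2+\eta^2)/(4\pi)$ by the definition of $\psi_T$, and the remaining sums re-exponentiate to yield an Edgeworth-type expansion
$$\Phi_T(\xi,\eta)=e^{-(\xi^2+\eta^2)/(4\pi)}\Bigl(1+\sum_{k=1}^{5}\frac{R_k(\xi,\eta)}{\sqrt{\psi_T}^{k}}\Bigr)+E_T(\xi,\eta),$$
uniformly on a sufficiently large square $|\xi|,|\eta|\le\psi_T^{\alpha}$, with explicit polynomials $R_k$ of degree $\le k$ read off from the joint cumulants of $\kappa(\sigma_T,X)$ and a remainder $E_T$ contributing $O(\psi_T^{-3})$ after inversion. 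The choice of normalization of $\psi_T$ makes the constant term inside the bracket exactly $1$, which is what ultimately forces $g_0\equiv1$.

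Fourier inversion then recovers the joint density $f_T(x,y)$ as a linear combination of terms polynomial$\,\times\,e^{-\pi(x^2+y^2)}$: multiplication by $R_k(\xi,\eta)$ on the Fourier side corresponds to applying a constant-coefficient differential operator of the same degree to the Gaussian, whose inverse transform is $e^{-\pi(x^2+y^2)}$. Integrating the resulting density over $[a,b]\times[c,d]$ yields the stated formula with polynomials $g_k$ of degree $\le k$ and $g_0\equiv1$. The main obstacle is controlling the Fourier-inversion contribution from the regime where the Taylor expansion of $\log\Phi_{T,p}$ is invalid; as in \cite{Ts}, this is handled by uniform decay estimates on $|\Phi_{T,p}(\xi,\eta)|$ (analogues of large-argument bounds on $|J_0|$), which, multiplied over a suitable range of primes, yield rapid decay of $|\Phi_T|$ at infinity sufficient to absorb the tail into the admissible error. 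Since $\psi_T=\theta\log\log T+O(1)$, truncating the Edgeworth expansion at $k=5$ produces the remainder $O(1/\sqrt{\psi_T}^{6})=O(1/(\log\log T)^3)$ claimed in the statement.
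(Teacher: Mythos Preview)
Your proposal is correct and follows essentially the same route as the paper. The paper likewise factors the characteristic function over primes into Bessel-type local integrals $J(u,v,w)$, Taylor-expands $\log J$ (their Lemma~3.3) to get an Edgeworth expansion of $\widehat{\Phi}_{\mathrm{rand}}$ near the origin (Lemma~3.4), proves rapid decay away from the origin via van~der~Corput-type bounds on $|J|$ (Lemmas~3.2 and~3.5), and then Fourier-inverts to obtain the density as a polynomial times a Gaussian (Proposition~3.1), from which the theorem follows by the change of variables $x\mapsto x\sqrt{\pi\psi_T}$. The only cosmetic difference is that the paper works with the unnormalized $\log\zeta(\sigma_T,X)$ and rescales at the very end, whereas you normalize first; also, the paper makes the $g_k$ genuinely $T$-independent by evaluating the cumulant coefficients at $\sigma=1/2$ and absorbing the $O(\sigma_T-1/2)$ discrepancy into the error, a bookkeeping step you should make explicit when carrying out the argument.
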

The above theorem and the discrepancy imply the following corollary.
\begin{cor}\label{cor:central limit}
Let $0<\theta<1/2 $ be fixed and $\sigma_T =1/2+1/(\log T)^{\theta}$.
Let $a<b$ and $c<d$ be real numbers. Define 
\[
\kappa(\sigma_T+it )=\frac{\log\zeta(\sigma_T+it )}{\sqrt{ \pi \psi_T } },  \]
then for $T \geq 10$
\begin{align*}
\frac1T \meas& \{ t \in [T,2T] :  \kappa(\sigma_T +it )\in[a,b] \times [c,d] \}  \\ 
&=  \sum_{ 0 \leq k  \leq 5  } \frac{ 1  }{ \sqrt{\psi_T}^{k}}   \int_c^d \int_a^b g_k ( x,y )  e^{- \pi(x^{2}+y^{2}) } dx dy + O \bigg(  \frac{1}{ ( \log \log T)^3}\bigg).
\end{align*}

 \end{cor}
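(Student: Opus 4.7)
The plan is to deduce Corollary \ref{cor:central limit} by concatenating Theorem \ref{thm:disc} and Theorem \ref{thm:asymp}, with only a trivial rescaling to pass between the two. First I would observe that the event $\kappa(\sigma_T+it)\in[a,b]\times[c,d]$ is equivalent to $\log\zeta(\sigma_T+it)\in\mathcal{R}_T$, where
\[
\mathcal{R}_T := \bigl[a\sqrt{\pi\psi_T},\,b\sqrt{\pi\psi_T}\bigr]\times\bigl[c\sqrt{\pi\psi_T},\,d\sqrt{\pi\psi_T}\bigr]
\]
is an axis-parallel rectangle in $\mathbb{C}$, and that the corresponding identification holds for the random event $\kappa(\sigma_T,X)\in[a,b]\times[c,d]$. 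In particular $\mathcal{R}_T$ is an admissible region in the definition of $D_{\sigma_T}(T)$.

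Next I would invoke Theorem \ref{thm:disc} at $\sigma=\sigma_T=1/2+(\log T)^{-\theta}$. Since $0<\theta<1/2$, I may fix any $\eta$ with $0<\eta<(1-\theta)/4$, and since $\sigma_T\leq 3/4$ for $T$ sufficiently large, Theorem \ref{thm:disc} yields
\[
\left|\frac{1}{T}\meas\{t\in[T,2T]:\log\zeta(\sigma_T+it)\in\mathcal{R}_T\}-\mathbf{P}[\log\zeta(\sigma_T,X)\in\mathcal{R}_T]\right|\ll\frac{1}{(\log T)^\eta}.
\]
Theorem \ref{thm:asymp} then expands $\mathbf{P}[\kappa(\sigma_T,X)\in[a,b]\times[c,d]]$ as the stated finite sum up to an error of order $(\log\log T)^{-3}$.

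Combining these two estimates and noting that $(\log T)^{-\eta}$ decays faster than any negative power of $\log\log T$, the discrepancy error is absorbed into the asymptotic-expansion error, and the corollary follows. I do not expect any genuine obstacle here: the essential probabilistic work is already carried out in Theorem \ref{thm:asymp}, and the transfer to the $t$-average is exactly what the quantitative discrepancy in Theorem \ref{thm:disc} was designed to deliver. The only mild point of care is to verify that one may choose the parameter $\eta$ in Theorem \ref{thm:disc} positive for our range of $\theta$, which is immediate since $(1-\theta)/4>1/8$.
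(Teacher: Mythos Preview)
Your proposal is correct and follows exactly the approach indicated in the paper, which simply states that ``the above theorem and the discrepancy imply the following corollary'' without giving further details. Your write-up fills in precisely the intended argument: rescale the rectangle, apply Theorem~\ref{thm:disc} with some $0<\eta<(1-\theta)/4$, apply Theorem~\ref{thm:asymp}, and absorb $(\log T)^{-\eta}$ into $O((\log\log T)^{-3})$.
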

 Note that the first term ($k=0$) in the above sum is nothing but 
 $$    \int_c^d \int_a^b   e^{- \pi(x^{2}+y^{2}) } dx dy .$$
 It is interesting to compare this with Selberg's central limit theorem \cite[Theorem 2]{Sel}, which says that the distribution of $\log \zeta(1/2+it) $, $T\leq t\leq2T$ is Gaussian with mean 0 and variance $\approx \log\log T$. More precisely, let 
\[
\tilde{\kappa}(\sigma,t)=\frac{\log\zeta(\sigma+it)}{\sqrt{ \pi \sum_{p<t}p^{-2\sigma} } }.
\]
For $1/2\leq\sigma\leq1/2+1/(\log T)^{\delta}$ and $a<b$, he proved that  
\begin{align*}
\frac{1}{T}&\meas\left\{ t\in[T,2T]\,:\,a<\Re\tilde{\kappa}(\sigma,t)\leq b\right\} =\int_{a}^{b}e^{-\pi u^{2}}du+O\left(\frac{(\log\log\log T)^2 }{\sqrt{\log\log T}}\right), \\  
\frac{1}{T}& \meas\left\{ t\in[T,2T]\,:\,a<\Im\tilde{\kappa}(\sigma,t)\leq b\right\} =\int_{a}^{b}e^{-\pi u^{2}}du+O\left(\frac{ \log\log\log T  }{\sqrt{\log\log T}}\right).
\end{align*}
So Corollary \ref{cor:central limit} could suggest an asymptotic expansion of Selberg's theorem conjecturally. 

The remaining part of this paper consists of the proofs. Theorem \ref{thm:aval} and Corollary \ref{cor:aval} are proved in Section \ref{section:aval}, Theorem \ref{thm:disc} is proved in Section \ref{section:disc} and Theorem \ref{thm:asymp} is proved in Section \ref{section:asymp}.



\section{The discrepancy bounds near the critical line}\label{section:disc}
 
 In this section we prove Theorem \ref{thm:disc}. Define two functions
 \begin{align*}
\Phi_T  ( {\cal B} )&:= \frac{1}{T}\meas\left\{ t\in[T,2T]\,:\,\log\zeta(\sigma+it)\in{\cal B}\right\},\\
\Phi_{\textrm{rand}}( { \cal B}) & := \mathbf{P}\left[ \log\zeta(\sigma,X)\in{\cal B}\right]
 \end{align*}
 for a Borel set $\cal B$ in $\mathbf{C}$, then we see that 
 \[
D_{\sigma}(T)\coloneqq\sup_{{\cal R}}\left|\Phi_T  ( {\cal R} )  -\Phi_{\textrm{rand}}( { \cal R}) \right|.
\]
We first prove that the Fourier transforms
\[
\widehat{\Phi}_T (u,v)\coloneqq\frac{1}{T}\int_{T}^{2T}\exp\left(2\pi i ( u\Re\zeta(\sigma+it)+ v\Im\zeta(\sigma+it)) \right)dt
\]
and
\[
\widehat{\Phi}_{\textrm{rand}}(u,v)\coloneqq\mathbf{E}\left[ \exp\left(2\pi i ( u\Re\zeta(\sigma,X)+ v\Im\zeta(\sigma,X) ) \right)\right]
\]
 are close, expecting that this would imply $D_{\sigma}(T)$ is small. For a fixed  $1/2<\sigma<1$, Lamzouri et al. \cite{LLR} showed that for any $A\geq1$ there exists $b=b(\sigma,A)$ such that
\[
\widehat{\Phi}(u,v)=\widehat{\Phi}_{\textrm{rand}}(u,v)+O\left(\frac{1}{(\log T)^{A}}\right)
\]
holds for $|u|$, $|v|\leq b(\log T)^{\sigma}$. 
When $\sigma$ approaches $1/2$, we have a similar estimate for a shorter range as follows.
\begin{thm}
\label{thm:phi.hat}Let $0<\theta<1/2$ and $0<\theta_{L}<(1-\theta)/4$
be fixed constants and let $ \sigma_T =1/2+(\log T)^{-\theta}\leq \sigma \leq 3/4$.
Then there exists a $\delta>0$ such that for sufficiently large $T$,
and for all $|u|$, $|v|\leq L := (\log T)^{\theta_{L}}$, we have
\[
\widehat{\Phi}_T(u,v)=\widehat{\Phi}_{\textrm{rand}}(u,v)+O\left( e^{ -(\log T)^{\delta}} \right)  .
\]
\end{thm}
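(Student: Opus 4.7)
The plan is to follow the moment-matching approach of Lamzouri, Lester and Radziwi\l\l\ \cite{LLR}, keeping careful track of every constant's dependence on $\sigma-1/2\geq(\log T)^{-\theta}$. The argument proceeds by first replacing $\log\zeta(\sigma+it)$ and $\log\zeta(\sigma,X)$ with a common short Dirichlet polynomial supported on prime powers $p^k\leq Y$ for a suitable parameter $Y$, and then matching the Fourier integrals of these truncations by computing moments.

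For the truncation step, set
\[
P_Y(s)=\sum_{p^k\leq Y}\frac{1}{k p^{ks}},\qquad P_Y(s,X)=\sum_{p^k\leq Y}\frac{X(p)^k}{k p^{ks}}.
\]
The goal is a mean-square bound
\[
\frac{1}{T}\int_T^{2T}|\log\zeta(\sigma+it)-P_Y(\sigma+it)|^{2}\,dt\ll(\log T)^{-A}
\]
for any fixed $A>0$. Inspecting the LLR proof, this follows from Selberg's Dirichlet polynomial approximation of $\log\zeta$ combined with a zero-density estimate uniform in $\sigma\geq 1/2+(\log T)^{-\theta}$, provided $\log Y$ is sufficiently large compared with $(\log T)^{\theta}$. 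The analogous random-model error is the tail $\sum_{p>Y}p^{-2\sigma}$, controllable under the same condition. Via Cauchy--Schwarz this reduces the theorem to showing the corresponding closeness for the truncated Fourier integrals $\widehat\Phi_T^{\flat}(u,v)$ and $\widehat\Phi_{\mathrm{rand}}^{\flat}(u,v)$.

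For the matching step, expand the exponential as a truncated Taylor series
\[
\exp\bigl(2\pi i(u\Re P_Y+v\Im P_Y)\bigr)=\sum_{m=0}^{M-1}\frac{(2\pi i)^{m}}{m!}\bigl(u\Re P_Y+v\Im P_Y\bigr)^{m}+R_M,
\]
and do the same on the random side. Each monomial of degree $m$ is a Dirichlet polynomial of length at most $Y^m$, so by the elementary orthogonality $\int_T^{2T} n^{-it}\,dt\ll 1/\log n$ its time-average over $[T,2T]$ agrees with its random-model expectation up to an off-diagonal error of size $O(Y^m/T)$ per frequency. Summing these errors for $m\leq M$ and controlling the Taylor remainder $R_M$ by a moment bound, where Step~1 guarantees that the two models have matching moments up to the truncation error, yields the stated estimate $|\widehat\Phi_T(u,v)-\widehat\Phi_{\mathrm{rand}}(u,v)|\ll\exp(-(\log T)^{\delta})$.

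The main obstacle is balancing the three parameters $Y$, $M$, and $L$. The truncation step forces $\log Y\gg(\log T)^{\theta}$; the orthogonality step forces $M\log Y\leq(1-\varepsilon)\log T$; and the Taylor cutoff $M$ must be taken sufficiently large relative to $L\sqrt{\log\log T}$ (the typical magnitude of $u\Re P_Y+v\Im P_Y$) so that $R_M$ is negligible. When $\sigma$ is close to $1/2$ the variance $\sum_{p\leq Y}p^{-2\sigma}$ itself approaches $\log\log T$, forcing $M$ to grow and eating into the admissible range of $L$. Tracking this blow-up with care through all of Selberg's approximation, the zero-density input, and the moment estimates is the delicate technical point; it is exactly this bookkeeping that dictates the restriction $\theta_L<(1-\theta)/4$ and the shape of the final error term $\exp(-(\log T)^{\delta})$.
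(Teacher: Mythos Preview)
Your outline follows essentially the paper's strategy: replace $\log\zeta$ by the short Dirichlet polynomial $R_Y$, then compare the two Fourier integrals via a truncated exponential series and moment matching. There is, however, a concrete gap in the truncation step. You claim a mean-square bound $\tfrac{1}{T}\int_T^{2T}|\log\zeta-P_Y|^2\,dt\ll(\log T)^{-A}$ for every fixed $A$ and propose to transfer this by Cauchy--Schwarz. That produces only a \emph{polynomial} saving $(\log T)^{\theta_L-A/2}$ in $\widehat\Phi_T-\widehat\Phi_T^{\flat}$, whereas the theorem asserts the super-polynomial error $e^{-(\log T)^\delta}$; no choice of a fixed $A$ closes this gap. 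The paper obtains the required quality of error by using instead a \emph{pointwise} approximation outside a small exceptional set (\prettyref{lem:log.approx}, after Granville--Soundararajan): with $Y=\exp((\log T)^{\theta_1})$ for some $\theta<\theta_1<1-\theta$, one has $\log\zeta(\sigma+it)=R_Y(\sigma+it)+O\bigl(\exp(-\tfrac12(\log T)^{\theta_1-\theta})\bigr)$ for all $t\in[T,2T]$ outside a set of measure $\ll T\exp(-\tfrac13(\log T)^{1-\theta})$, and this super-polynomial error feeds directly into the Fourier integral.

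A second imprecision concerns the Taylor remainder and the origin of the restriction $\theta_L<(1-\theta)/4$. Your plan to control $R_M$ ``by a moment bound'' is not carried out, and you simply assert that the bookkeeping yields this exponent. In the paper the mechanism is different and more delicate: one first restricts the $t$-integral to the set $\{|R_Y(\sigma+it)|\leq A\}$, on which the Taylor tail is bounded by $A^N/N!\ll e^{-N}$ once $N\gg A$, and then reinstates the complement via Cauchy--Schwarz together with the large-deviation estimate of \prettyref{lem:weak.large.dev}. This introduces an extra parameter $A$ which must satisfy simultaneously $A\gg\sqrt{N}\,L\sum_{p\leq Y}p^{-2\sigma}$ (from the add-back step) and $A^2\ll(\log T/\log Y)\sum_{p\leq Y}p^{-2\sigma}$ (the range of validity of the large-deviation lemma). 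Combining these with $N\log Y<\log T$ and $\theta_1>\theta$ is exactly what forces $4\theta_L<1-\theta$; your sketch does not make this interaction visible.
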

We postpone the proof of Theorem \ref{thm:phi.hat}  to Section \ref{sec:phi.hat}. 
Here, the valid range of $u$ and $v$, or equivalently the size of $L$, is important because the wider valid range implies the smaller discrepancy bound. This is proved in the following proposition.   
\begin{prop}
Let $0<\theta<1/2$ and $0<\theta_{L}<(1-\theta)/4$
be fixed constants and let $ \sigma_T =1/2+(\log T)^{-\theta}\leq \sigma \leq 3/4$. Define
$$ \mathcal{R}_T := \left\{ z\in\mathbf{C}\,:\,\left|\Re z\right|,\,\left|\Im z\right|\leq \Cr{prop.rect}\log\log T\right\} $$
with a large fixed constant $\Cl{prop.rect}$. Then for any rectangle $\mathcal{R}\subseteq  \mathcal{R}_T  $ whose sides are parallel to the axes, we have
\[
\Phi_T (\mathcal{R})=\Phi_{\textrm{rand}}(\mathcal{R})+O\left(\frac{1}{(\log T)^{\theta_{L}}}\right)
\]
as $ T \to \infty$. 
\end{prop}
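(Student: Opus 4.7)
The strategy is to transfer the Fourier-transform comparison of Theorem~\ref{thm:phi.hat} into a distributional comparison on rectangles via a two-dimensional Beurling--Selberg smoothing, which is a standard method in this context (cf.\ the analogous step in \cite{LLR}). Write $\mu_T$ and $\mu_{\mathrm{rand}}$ for the Borel probability measures on $\mathbf{R}^2$ induced by $\log\zeta(\sigma+it)$ (with $t$ uniform on $[T,2T]$) and by $\log\zeta(\sigma,X)$. For the target rectangle $\mathcal{R}\subseteq\mathcal{R}_T$ I would construct a majorant/minorant pair $F_\mathcal{R}^{\pm}$ with $F_\mathcal{R}^{-}\leq\mathbf{1}_\mathcal{R}\leq F_\mathcal{R}^{+}$, Fourier transforms supported in $[-L,L]^2$ with $L=(\log T)^{\theta_L}$, and satisfying
\[
\int_{\mathbf{R}^2}(F_\mathcal{R}^{+}-F_\mathcal{R}^{-})(x,y)\,dx\,dy\ll\frac{\log\log T}{L},
\]
the extra $\log\log T$ reflecting the fact that the side lengths of $\mathcal{R}$ inside $\mathcal{R}_T$ are $O(\log\log T)$.

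The sandwich $\int F_\mathcal{R}^{-}\,d\mu\leq\mu(\mathcal{R})\leq\int F_\mathcal{R}^{+}\,d\mu$, applied to both $\mu=\mu_T$ and $\mu=\mu_{\mathrm{rand}}$, combined with Parseval, yields
\[
\Phi_T(\mathcal{R})-\Phi_{\mathrm{rand}}(\mathcal{R})\leq\int(F_\mathcal{R}^{+}-F_\mathcal{R}^{-})\,d\mu_{\mathrm{rand}}+\int_{[-L,L]^2}\widehat{F_\mathcal{R}^{+}}(-u,-v)\bigl(\widehat{\Phi}_T-\widehat{\Phi}_{\mathrm{rand}}\bigr)(u,v)\,du\,dv
\]
and an analogous lower bound. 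The Fourier integral is bounded using $\|\widehat{F_\mathcal{R}^{\pm}}\|_\infty\leq\|F_\mathcal{R}^{\pm}\|_1\ll(\log\log T)^2$ and Theorem~\ref{thm:phi.hat}, producing a contribution $\ll L^2(\log\log T)^2 e^{-(\log T)^\delta}$, which is negligible. The first summand is controlled by
\[
\int(F_\mathcal{R}^{+}-F_\mathcal{R}^{-})\,d\mu_{\mathrm{rand}}\leq\|M_\sigma\|_\infty\int(F_\mathcal{R}^{+}-F_\mathcal{R}^{-})(x,y)\,dx\,dy\ll\frac{\|M_\sigma\|_\infty\log\log T}{L},
\]
where $M_\sigma$ is the continuous density of $\mu_{\mathrm{rand}}$ on $\mathbf{C}$.

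The main technical step is therefore to establish the density bound $\|M_\sigma\|_\infty\ll 1/\log\log T$, uniform in $\sigma\in[\sigma_T,3/4]$. By Fourier inversion this reduces to a sub-Gaussian estimate of the shape
\[
|\widehat{\Phi}_{\mathrm{rand}}(u,v)|\ll\exp\bigl(-c(u^2+v^2)\psi_T\bigr)
\]
for some absolute $c>0$ and $(u,v)$ in the relevant range, which is obtained by expanding the random Euler product $\widehat{\Phi}_{\mathrm{rand}}(u,v)=\prod_p\mathbf{E}[\exp(2\pi i(u\Re+v\Im)\log(1-X(p)/p^{\sigma})^{-1})]$ and bounding each factor by $\exp(-c(u^2+v^2)/p^{2\sigma})$ for the primes $p$ where $p^\sigma$ is not too small, followed by summation to recover the factor $\psi_T\asymp\log\log T$. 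Estimates of this flavour appear in Borchsenius--Jessen~\cite{BJ} and are refined in \cite{LLR}, and I expect to adapt them directly; this is the principal obstacle. Once the density bound is in hand, assembling the two estimates above gives the desired discrepancy bound, with the harmless factor $\log\log T$ absorbed by a slight decrease of $\theta_L$ (permissible since $\theta_L$ ranges over the open interval $(0,(1-\theta)/4)$).
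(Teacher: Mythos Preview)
Your proposal is correct and takes essentially the same Beurling--Selberg route that the paper adopts by deferring to Section~6 of \cite{LLR}, with Theorem~\ref{thm:phi.hat} supplying the Fourier comparison. The sub-Gaussian decay of $\widehat{\Phi}_{\mathrm{rand}}$ that you need for the density bound $\|M_\sigma\|_\infty\ll 1/\log\log T$ is precisely what the paper records in Lemma~\ref{lem:Phi.hat.decay}, and the absorption of the residual $\log\log T$ factor by slightly shrinking $\theta_L$ is the same device the paper invokes.
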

\begin{proof}[Sketched proof]
We may follow the flow of Section 6 of \cite{LLR}, except we need
to apply \prettyref{thm:phi.hat} instead of Theorem 2.1 of \cite{LLR}
and \prettyref{lem:weak.large.dev} instead of Lemma 6.3 of \cite{LLR}.
The latter change makes extra $\sqrt{\log\log T}$ in (6.10) of \cite{LLR} and the
corresponding integral for $I_{T}(L,s)$. However, by choosing $ \theta_L$ slightly smaller, we could delete the extra $\sqrt{\log \log T}$.
\end{proof}

The above proposition is almost equivalent to \prettyref{thm:disc}. To conclude the proof of \prettyref{thm:disc}, it is enough to  show that
\begin{equation}\label{eqn large dev small 1}
 \Phi_T (\mathcal{R} ) = \Phi_T ( \mathcal{R} \cap \mathcal{R}_T ) +   O\left(\frac{1}{(\log T)^{\theta_{L}}}\right) 
 \end{equation}
and
\begin{equation}\label{eqn large dev small 2}
 \Phi_{\textrm{rand}} (\mathcal{R} ) = \Phi_{\textrm{rand}} ( \mathcal{R} \cap \mathcal{R}_T ) +   O\left(\frac{1}{(\log T)^{\theta_{L}}}\right) 
 \end{equation}
for any rectangles $\mathcal{R}$ whose sides are parallell to the axes.  By \prettyref{lem:weak.large.dev} we can easily see that
\begin{align*}
| \Phi_{\textrm{rand}} (\mathcal{R} ) - \Phi_{\textrm{rand}} ( \mathcal{R} \cap \mathcal{R}_T )  |& \leq \mathbf{P} [  | \log \zeta(\sigma,X) | \geq \Cr{prop.rect}\log\log T ] \\
& \ll \frac{1}{ (\log T)^{\theta_L} }
\end{align*}
for a sufficiently large $\Cr{prop.rect}$ and this proves  \eqref{eqn large dev small 2}.   Similarly, we have that
\begin{equation}\label{eqn large dev 1}\begin{split}
| \Phi_T (\mathcal{R} ) & - \Phi_T ( \mathcal{R} \cap \mathcal{R}_T )  |  \\
& \leq  \frac{1}{T} \meas \{ t \in [T, 2T] :   | \log \zeta(\sigma+it) | \geq \Cr{prop.rect}\log\log T \}  .
\end{split}\end{equation}
By Lemma \ref{lem:large.sieve} or \eqref{proof 3.1 eqn 2}, there is a $\delta>0$ and a set $\mathcal{E} \subset [T,2T]$ with $ |\mathcal{E} | \ll T e^{ - ( \log T)^\delta} $ such that
$$\log \zeta(\sigma+it) = R_Y ( \sigma+it) + O(  e^{ - ( \log T)^\delta} )$$
holds for $ t \in [T,2T] \setminus \mathcal{E}$. Then
\begin{align*}
 \frac{1}{T} &\meas \{ t \in [T, 2T] :   | \log \zeta(\sigma+it) | \geq \Cr{prop.rect}\log\log T \} \\
 & \leq  \frac{| \mathcal{E} |}{T} + \frac{1}{T} \meas \{ t \in [T, 2T] \setminus \mathcal{E}  :   | R_Y(\sigma+it)   | \geq  \Cr{prop.rect}\log\log T -1 \} .
\end{align*}
By \prettyref{lem:weak.large.dev}  there is a constant $C>0$ such that
\begin{equation*}\begin{split} 
\frac{1}{T}& \meas \left\{t \in [T,2T] :  \left|R_{Y}(\sigma+it)\right|\geq   \Cr{prop.rect}  \log\log T  -1 \right\} \\
 & \leq\exp\left(-\frac{(  \Cr{prop.rect} \log\log T -1 )^{2}}{C\sum_{p\leq Y}p^{-2\sigma}}\right) .
\end{split}\end{equation*}
Given $A>0$, we can choose large $\Cr{prop.rect}$ such that 
$$ \exp\left(-\frac{(  \Cr{prop.rect} \log\log T -1 )^{2}}{C\sum_{p\leq Y}p^{-2\sigma}}\right)  \ll \frac{1}{ (\log T)^A } .$$ Thus, given $A>0$ there exists a constant $\Cr{prop.rect}>0$ such that
\begin{equation} \label{eqn log zeta large dev}
  \frac{1}{T} \meas \{ t \in [T, 2T] :   | \log \zeta(\sigma+it) | \geq \Cr{prop.rect}\log\log T \} \ll \frac{1}{ (\log T)^A }.
\end{equation}  
Hence \eqref{eqn large dev small 1} follows from  \eqref{eqn large dev 1} and \eqref{eqn log zeta large dev}. Therefore, we conclude the proof of  \prettyref{thm:disc}.

\subsection{Proof of Theorem \ref{thm:phi.hat}}\label{sec:phi.hat}

We list required lemmas to prove the theorem. We omit the proof if  its cited proof remains valid without much change. 
\begin{lem}
\label{lem:log.approx}\cite[Lemma 2.2]{GS} Let $\sigma_T \leq  \sigma\leq1$
and let $Y$, $T$ be real numbers such that $3 \leq Y \leq T/2 $. Then
there is a set $\mathcal{E}= \mathcal{E}(\sigma,T)$ of measure $\ll T^{5/4-\sigma/2}Y(\log T)^{5}$
such that  
\[
\log\zeta(\sigma+it)=R_{Y}(\sigma+it)+O\left(Y^{(1/2-\sigma)/2}(\log T)^{3}\right)
\]
for all $t\in[T,2T] \setminus \mathcal{E}$, where
$$ R_{Y}(s)  \coloneqq\sum_{m\leq Y}\frac{\Lambda(m)}{\log m} \frac{1}{m^s}=\sum_{\substack{p,k\\
p^{k}\leq Y
}
}\frac{1}{k p^{ ks}} .$$
\end{lem}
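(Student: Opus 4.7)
The plan is to establish the approximation by a Mellin--Perron contour integral, shift the contour past $w=0$ to expose $\log\zeta(s)$ as a residue, and then control the remainder using a pointwise Hadamard-product bound on $\log\zeta$ away from a thin set of heights $t$ where $\zeta$ has nearby zeros. This is the Selberg / Granville--Soundararajan strategy used throughout \cite{GS}.

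Starting from the absolutely convergent identity $\log\zeta(u)=\sum_{n\ge 2}\Lambda(n)/(n^{u}\log n)$ on $\Re u>1$ and the smoothed Perron formula with kernel $Y^{w}/w$, I would write
$$R_{Y}(s)=\frac{1}{2\pi i}\int_{c-iT_{1}}^{c+iT_{1}}\log\zeta(s+w)\,\frac{Y^{w}}{w}\,dw+O(\text{truncation and smoothing errors})$$
for $c>\max(0,1-\sigma)$ and a truncation height $T_{1}$ chosen as a small power of $T$. Shifting the contour to $\Re w=-\alpha$ with $\alpha=\tfrac{1}{2}(\sigma-\tfrac{1}{2})$, the only residue inside the rectangle is $\log\zeta(s)$ at $w=0$ (the pole of $\zeta$ at $w=1-s$ sits at imaginary height $-t$, well outside the truncated contour), yielding
$$R_{Y}(s)-\log\zeta(s)=\frac{1}{2\pi i}\int_{-\alpha-iT_{1}}^{-\alpha+iT_{1}}\log\zeta(s+w)\,\frac{Y^{w}}{w}\,dw+(\text{smaller errors}).$$
Provided $s+w$ stays away from every zero of $\zeta$, the Hadamard product gives $|\log\zeta(\sigma-\alpha+i\tau)|\ll(\log T)^{2}$ uniformly for $T\le\tau\le 2T$, and then the decay $Y^{w}\mapsto Y^{-\alpha}=Y^{(1/2-\sigma)/2}$ on the shifted line, combined with $\int dw/|w|\ll\log T_{1}$, produces exactly the claimed error $O(Y^{(1/2-\sigma)/2}(\log T)^{3})$.

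The exceptional set $\mathcal{E}$ would be defined as those $t\in[T,2T]$ for which some zero $\rho=\beta+i\gamma$ of $\zeta$ violates this safety margin, say $\beta>\sigma-\alpha-\delta$ and $|\gamma-t|<\eta$ for a suitable $\eta$ depending on $Y$. Each such zero forces a short interval around $\gamma$ to be excluded, so $|\mathcal{E}|$ is bounded by (number of such zeros)$\,\times\,\eta$. Inserting a Selberg-type zero-density estimate $N(\alpha',T)\ll T^{1-c(\alpha'-1/2)}(\log T)^{A}$ at $\alpha'\approx\tfrac{1}{2}+\tfrac{1}{2}(\sigma-\tfrac{1}{2})$ and choosing $\eta$ large enough to kill the logarithmic singularity of $\log\zeta$ at each nearby zero, one arrives at $|\mathcal{E}|\ll T^{5/4-\sigma/2}Y(\log T)^{5}$ after tracking the exponents.

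The main obstacle is the simultaneous calibration of the parameters $\alpha$, $T_{1}$, $\eta$, and the smoothing weight: they must be tuned so that the integral on the good contour is bounded by $Y^{(1/2-\sigma)/2}(\log T)^{3}$, the exceptional set has measure at most $T^{5/4-\sigma/2}Y(\log T)^{5}$, and all the auxiliary horizontal-piece, smoothing, and zero-crossing errors are absorbed. Matching the precise exponent $5/4-\sigma/2$ requires the specific shape of the Selberg density estimate rather than a generic one, and this calibration--absorption step is the bulk of the technical work; the rest follows the standard template in \cite{GS}.
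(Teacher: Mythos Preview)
The paper does not supply its own proof of this lemma: it is quoted verbatim from \cite[Lemma~2.2]{GS} under the blanket remark that ``we omit the proof if its cited proof remains valid without much change,'' so there is nothing in the paper to compare against beyond the Granville--Soundararajan argument itself. Your outline---Perron's formula for $R_Y(s)$, contour shift to $\Re w=-\tfrac12(\sigma-\tfrac12)$, Hadamard-product pointwise bound on $\log\zeta$ away from zeros, and a zero-density input to bound the measure of the exceptional heights---is precisely the standard template used in \cite{GS}, so the proposal is on target.
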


\begin{lem}
\label{lem:large.sieve} \cite[Lemma 3.2]{LLR} Let $Y\geq3$ and $k\geq1$ so that
$Y^{k}\leq T^{1/3}$ and let $a_{p}$ for each prime $p$ be a complex
number with $|a_{p}|\leq1$. Then, 
\[
\int_{T}^{2T}\left|\sum_{p\leq Y} \frac{a_p}{p^{\sigma+it}} \right|^{2k}dt\leq k!\left(\sum_{p\leq Y}  \frac{ |a_{p}|^2}{p^{2\sigma}}  \right)^{k}+O\left(T^{-1/3}\right).
\]
Additionally, for any positive integer $k$ we have
\[
\mathbf{E}\left[ \left|\sum_{p\leq Y} \frac{a_{p}X(p)}{p^\sigma}\right|^{2k}\right]  \leq k!\left(\sum_{p\leq Y}  \frac{ |a_{p}|^{2}}{p^{2\sigma}} \right)^{k}.
\]
\end{lem}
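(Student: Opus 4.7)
The plan is the standard moment method for prime Dirichlet polynomials: expand the $2k$-th power as a multisum, then separate diagonal from off-diagonal terms. For the deterministic bound, the diagonal is handled by combinatorics and the off-diagonal by the oscillation of $(n/m)^{it}$; for the random bound, the independence of the $X(p)$ collapses off-diagonal contributions to zero.

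Concretely, I would begin by writing
\[
\left|\sum_{p\leq Y}\frac{a_p}{p^{\sigma+it}}\right|^{2k}=\sum_{\substack{p_1,\ldots,p_k\leq Y\\q_1,\ldots,q_k\leq Y}}\frac{a_{p_1}\cdots a_{p_k}\,\overline{a_{q_1}\cdots a_{q_k}}}{(p_1\cdots p_k\,q_1\cdots q_k)^{\sigma}}\left(\frac{q_1\cdots q_k}{p_1\cdots p_k}\right)^{it},
\]
and integrate in $t$. Setting $m=p_1\cdots p_k$ and $n=q_1\cdots q_k$, the $t$-integral equals $T$ when $m=n$ and is bounded by $2/|\log(n/m)|$ otherwise. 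By unique factorization, $m=n$ forces $(q_j)$ to be a permutation of $(p_i)$ as multisets; grouping by the underlying multiset of distinct primes with multiplicities $e_1,\ldots,e_r$ and comparing the $\binom{k}{e_1,\ldots,e_r}^2$ ordered diagonal pairs against the $\binom{k}{e_1,\ldots,e_r}$ contribution from expanding $\bigl(\sum_p|a_p|^2/p^{2\sigma}\bigr)^k$, the diagonal is bounded by $k!\,\bigl(\sum_{p\leq Y}|a_p|^2/p^{2\sigma}\bigr)^k$ (after the implicit normalization by $T$ contained in the statement).

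For the off-diagonal contribution, since $m,n\leq Y^k\leq T^{1/3}$ are distinct positive integers, $|\log(n/m)|\geq\log(1+1/\max(m,n))\gg T^{-1/3}$, so each off-diagonal integral is $O(T^{1/3})$. There are at most $Y^{2k}\leq T^{2/3}$ off-diagonal tuples, each summand has coefficient bounded by $1$ via $|a_p|\leq 1$, and the denominator $(mn)^{-\sigma}$ gives further decay, yielding the claimed $O(T^{-1/3})$ error. The random bound is cleaner: by independence and $\mathbf{E}[X(p)^a\overline{X(p)}^b]=\delta_{ab}$, off-diagonal tuples have expectation zero, so only the diagonal contributes and the Step-1 combinatorics deliver the stated bound with no error term. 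The main technical point I anticipate is the off-diagonal accounting for the integral estimate, where one must check that the crude count of tuples paired with the $|\log(n/m)|^{-1}$ bound is indeed sharp enough to deliver the claimed $O(T^{-1/3})$ under the tight hypothesis $Y^k\leq T^{1/3}$.
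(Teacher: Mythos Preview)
The paper does not supply its own proof of this lemma; it is quoted verbatim from \cite[Lemma~3.2]{LLR} and used as a black box (see the sentence preceding \prettyref{lem:log.approx}). Your moment-expansion strategy---expand the $2k$-th power as a multisum over prime tuples, integrate termwise, and split into diagonal ($m=n$) and off-diagonal ($m\neq n$)---is exactly the standard argument behind such lemmas, and your diagonal combinatorics (comparing $\binom{k}{e_1,\ldots,e_r}^{2}$ against $k!\binom{k}{e_1,\ldots,e_r}$) and your treatment of the random case via independence and $\mathbf{E}[X(p)^a\overline{X(p)}^b]=\delta_{ab}$ are both correct. You are also right that the displayed inequality only makes sense with an implicit $1/T$ on the left (or a factor $T$ on the right); this is how it is used in the proof of \prettyref{lem:2k mmt}.

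One point you should tighten: as written, your off-diagonal count gives only $O(1)$ after normalization, not $O(T^{-1/3})$. You have at most $Y^{2k}\leq T^{2/3}$ tuples, each integral $O(T^{1/3})$, and coefficient $\leq 1$, which yields $O(T)$ before dividing by $T$. The ``further decay'' from $(mn)^{-\sigma}$ that you invoke has to be made quantitative: with $\sigma\geq 1/2$ one has
\[
\sum_{\text{tuples}}(p_1\cdots p_k\,q_1\cdots q_k)^{-\sigma}=\Bigl(\sum_{p\leq Y}p^{-\sigma}\Bigr)^{2k}\ll \Bigl(\frac{C\sqrt{Y}}{\log Y}\Bigr)^{2k}\ll \frac{C^{2k}Y^{k}}{(\log Y)^{2k}},
\]
so that the off-diagonal is $\ll C^{2k}Y^{2k}/(\log Y)^{2k}\leq C^{2k}T^{2/3}/(\log Y)^{2k}$, hence $O(T^{-1/3})$ after normalization (with the implied constant depending on $k$ and on a lower bound for $Y$). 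This is precisely the step you flagged as the ``main technical point,'' so your instinct was correct; it just needs the $(mn)^{-\sigma}$ saving spelled out rather than left as a remark.
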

\begin{lem}\cite[Lemma 3.3]{LLR} \label{lem:2k mmt} Let $Y\geq3$ and $k\geq1$ so that
$Y^{k}\leq T^{1/3}$. Then there is an absolute constant $\Cl{2k.moment}>0$
such that for any $\sigma > 1/2$ 
\begin{equation}\label{lemma 2.5 eqn 1}
\frac{1}{T}\int_{T}^{2T}\left|R_{Y}(\sigma+it)\right|^{2k}dt\leq\left(\Cr{2k.moment}k\sum_{p\leq Y}p^{-2\sigma}\right)^{k}.
\end{equation}
Also for any $k\geq1$, we have
\begin{equation}\label{lemma 2.5 eqn 2}
\mathbf{E}\left[  \left|R_{Y}(\sigma,X)\right|^{2k}\right]  \leq\left(\Cr{2k.moment}k\sum_{p\leq Y}p^{-2\sigma}\right)^{k} 
\end{equation}
and
\begin{equation}\label{lemma 2.5 eqn 3}
\mathbf{E}\left[ \left|\log\zeta(\sigma,X)\right|^{2k}\right] \leq\left(\Cr{2k.moment}k\sum_{p}p^{-2\sigma}\right)^{k},
\end{equation}
where
$$ R_{Y}(\sigma,X)   \coloneqq\sum_{\substack{p,k\\   p^{k}\leq Y } }\frac{X(p)^{k}}{k p^{k\sigma}} .$$ 
\end{lem}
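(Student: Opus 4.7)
The plan is to prove all three bounds by adapting the orthogonality-plus-combinatorics argument of \prettyref{lem:large.sieve} from prime-indexed to prime-power-indexed Dirichlet polynomials. The extra weight $1/e$ attached to each prime power $p^e$ appears squared on the diagonal and furnishes a convergent $e^{-2}$ factor, which is what keeps the higher-power contributions under control.

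For \eqref{lemma 2.5 eqn 1}, I would expand $R_{Y}(s)^k = \sum_N c_k(N)\,N^{-s}$, where
\[
c_k(N) = \sum_{\substack{n_1 \cdots n_k = N \\ n_j = p_j^{e_j},\ e_j \geq 1,\ n_j \leq Y}} \prod_{j=1}^{k}\frac{1}{e_j},
\]
and then evaluate $\int_T^{2T} |R_Y^k(\sigma+it)|^2 dt$ by orthogonality. This produces a diagonal $T\sum_N c_k(N)^2 / N^{2\sigma}$ and an off-diagonal $\sum_{N \neq M} c_k(N)c_k(M)(NM)^{-\sigma}\int_T^{2T} (N/M)^{it}dt$. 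Under $Y^k \leq T^{1/3}$ the off-diagonal is $O(T^{-1/3})$, via $|\int_T^{2T}(N/M)^{it}dt| \ll |\log(N/M)|^{-1}$ and the standard bound $|\log(N/M)| \gg 1/\max(N,M)$. The diagonal sum factors essentially as an Euler product because $c_k$ is a multiplicative-type convolution; local factors at each prime $q$ can be evaluated using the generating function identity $\sum_{e \geq 1}x^e/e = -\log(1-x)$, leading to the bound
\[
\sum_{N} c_k(N)^2 / N^{2\sigma} \leq k!\,\bigg(\sum_{p^e \leq Y} \frac{1}{e^2 p^{2e\sigma}}\bigg)^k.
\]
For $\sigma \geq 1/2$ the tail $\sum_{e \geq 2}\sum_p e^{-2}p^{-2e\sigma}$ is bounded by an absolute constant, while $\sum_{p \leq Y} p^{-2\sigma}$ grows without bound as $\sigma \to 1/2^+$, so the parenthesised quantity is $\leq \Cr{2k.moment}\sum_{p \leq Y}p^{-2\sigma}$ for a suitable absolute constant, and Stirling's inequality $k! \leq k^k$ completes the bound.

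The estimate \eqref{lemma 2.5 eqn 2} is proved by the same expansion in the random model, with the role of time averaging played by the orthogonality $\mathbf{E}[X(p)^a\overline{X(p)}^b] = \mathbf{1}_{a=b}$; only the diagonal survives and no off-diagonal error arises. For \eqref{lemma 2.5 eqn 3}, I would pass to the limit $Y \to \infty$ in \eqref{lemma 2.5 eqn 2}: the almost sure convergence of $R_Y(\sigma,X)$ to $\log\zeta(\sigma,X)$ for $\sigma > 1/2$ combined with Fatou's lemma transfers the uniform bound to the limit.

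The main technical obstacle is carrying out the multiplicative factorisation of $\sum_N c_k(N)^2/N^{2\sigma}$ cleanly. In the prime-only case the argument reduces to the elementary identity $\binom{k}{\vec{\alpha}}^2 = k!\,\binom{k}{\vec{\alpha}}/\prod \alpha_j!$, which packages the moment into a power of a sum. For prime powers each prime's exponent $\beta$ must additionally be split into $k_\ell$ positive parts distributed among the factors, and the corresponding combinatorial weights involving $A(\beta,k) = [x^\beta](-\log(1-x))^k$ must be shown to be dominated by a local factor of the form $\sum_{e \geq 1} e^{-2} q^{-2e\sigma}$. A termwise Cauchy--Schwarz comparison exploiting the identity $(\alpha_\ell!)^2 \geq \alpha_\ell!$ (as in the prime case) together with the $\log$-generating function is the most promising route, but the constants need to be tracked carefully so that the bound survives uniformly as $\sigma$ approaches $1/2$.
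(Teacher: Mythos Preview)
Your strategy is coherent and could be pushed through, but it takes a harder road than the paper. The point where you say the diagonal ``factors essentially as an Euler product because $c_k$ is a multiplicative-type convolution'' is not right as stated: $c_k$ is the $k$-fold Dirichlet self-convolution of $b(n)=\mathbf{1}_{n=p^e\leq Y}/e$, and $b$ is not multiplicative (for distinct primes $p,q$ one has $b(pq)=0\neq b(p)b(q)$), so $c_k$ is not multiplicative and $\sum_N c_k(N)^2 N^{-2\sigma}$ does not split over primes. A bound of the shape $(Ck\sum_{p^e\leq Y} e^{-2}p^{-2e\sigma})^k$ does hold---one route is to write $R_Y(\sigma,X)=\sum_p Z_p$ with independent $Z_p$ and control the mixed moments $\mathbf{E}[Z_p^a\bar Z_p^b]$---but the clean $k!$ constant you assert is not obvious, and you yourself correctly flag this combinatorial step as the obstacle.

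The paper bypasses all of it. It splits $R_Y$ according to the exponent,
\[
R_Y(s)=\sum_{p\leq Y}\frac{1}{p^{s}}+\sum_{p\leq\sqrt{Y}}\frac{1}{2p^{2s}}+\sum_{\substack{p,\,e\geq 3\\p^e\leq Y}}\frac{1}{e\,p^{es}},
\]
uses the crude pointwise inequality $|a+b+c|^{2k}\leq 3^{2k}(|a|^{2k}+|b|^{2k}+|c|^{2k})$, and then invokes the prime-only bound of \prettyref{lem:large.sieve} verbatim on each of the first two pieces (each is a sum over primes with coefficients of modulus $\leq 1$); the third piece is bounded pointwise by $\log\zeta(3\sigma)=O(1)$. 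This gives \eqref{lemma 2.5 eqn 1} and \eqref{lemma 2.5 eqn 2} in a line with no new combinatorics, at the cost only of a worse absolute constant. Your treatment of \eqref{lemma 2.5 eqn 3} via $Y\to\infty$ matches the paper's.
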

\begin{proof} 
We divide $R_{Y}$ into three cases as $k=1$, $k=2$ and $k\geq3$. Then
\begin{align*}
\frac{1}{T}&\int_{T}^{2T}\left|R_{Y}(\sigma+it)\right|^{2k}dt\\
& \leq3^{k}   \left(\frac{1}{T}\int_{T}^{2T}\left|\sum_{p\leq Y}\frac{1}{p^{\sigma+it}}\right|^{2k}dt +\frac{1}{T}\int_{T}^{2T}\left|\sum_{p\leq \sqrt{Y}}\frac{1}{2p^{2\sigma+2it}}\right|^{2k}dt+\left(\log\zeta(3\sigma)\right)^{2k}\right).
\end{align*}
By \prettyref{lem:large.sieve}, we prove \eqref{lemma 2.5 eqn 1}. Similarly we can prove \eqref{lemma 2.5 eqn 2}. Taking $Y\to \infty$ in \eqref{lemma 2.5 eqn 2}, we see that \eqref{lemma 2.5 eqn 3} holds.
\end{proof}
\begin{lem}
\label{lem:weak.large.dev}\cite[Lemma 3.4]{LLR} Let $1/2<\sigma\leq3/4$,
$Y\geq2$ and $A=A(T) \geq1$. Then there is an absolute constant $\Cl{large.dev},\Cl{large.dev.2}>0$
such that if 
\begin{equation}\label{condition A}
 \Cr{large.dev} \left(\sum_{p\leq Y}p^{-2\sigma}\right) \leq A^{2}\leq    \frac{ \log T}{\Cr{large.dev}\log Y}\left(\sum_{p\leq Y}p^{-2\sigma}\right) , 
 \end{equation} 
we have
\[
\frac{1}{T}\meas\left\{t \in [T,2T] :  \left|R_{Y}(\sigma+it)\right|\geq A\right\} \leq\exp\left(-\frac{A^{2}}{\Cr{large.dev.2}\sum_{p\leq Y}p^{-2\sigma}}\right).
\]
We also have
\[
\mathbf{P}\left[ \left|R_{Y}(\sigma,X)\right|\geq A\right] \leq\exp\left(-\frac{A^{2}}{\Cr{large.dev.2}\sum_{p\leq Y}p^{-2\sigma}}\right)
\]
and
\[
\mathbf{P}\left[ \left|\log\zeta(\sigma,X)\right|\geq A\right] \leq\exp\left(-\frac{A^{2}}{\Cr{large.dev.2}\sum_{p}p^{-2\sigma}}\right)
\]
without constraints.
\end{lem}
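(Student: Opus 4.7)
The plan is to prove all three estimates by Markov's inequality applied to the $2k$-th moment, using the bounds already established in \prettyref{lem:2k mmt}, and then optimizing the choice of $k$.

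I would begin with the random cases, which are cleanest. Writing $V_Y = \sum_{p\leq Y} p^{-2\sigma}$ and applying Markov together with \eqref{lemma 2.5 eqn 2} gives, for any positive integer $k$,
\[
\mathbf{P}\bigl[|R_Y(\sigma,X)| \geq A\bigr] \leq \frac{1}{A^{2k}}\, \mathbf{E}\bigl[|R_Y(\sigma,X)|^{2k}\bigr] \leq \left(\frac{\Cr{2k.moment}\, k\, V_Y}{A^2}\right)^{k}.
\]
The function $k \mapsto (\Cr{2k.moment} k V_Y/A^2)^k$ is minimized near $k^{\ast} = A^2/(e \Cr{2k.moment} V_Y)$, and at this value equals $\exp(-A^2/(e\Cr{2k.moment} V_Y))$. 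Taking $k = \lfloor k^{\ast}\rfloor$ is admissible as soon as $k^{\ast} \geq 1$, which is precisely the lower bound imposed on $A^2$ in \eqref{condition A} with $\Cr{large.dev}$ sufficiently large relative to $e\Cr{2k.moment}$. Setting $\Cr{large.dev.2} = e\Cr{2k.moment}$ (adjusted to absorb the floor) yields the stated bound. For the $\log\zeta(\sigma,X)$ case the argument is identical, but one uses \eqref{lemma 2.5 eqn 3} in place of \eqref{lemma 2.5 eqn 2}, so that $V_Y$ is replaced by $V = \sum_p p^{-2\sigma}$, which is finite because $\sigma > 1/2$. Since \eqref{lemma 2.5 eqn 2} and \eqref{lemma 2.5 eqn 3} hold for every integer $k\geq 1$ without further restriction, no upper bound on $A^2$ is needed.

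For the measure-theoretic bound on $R_Y(\sigma+it)$, the same moment-method optimization applies with \eqref{lemma 2.5 eqn 1} in place of \eqref{lemma 2.5 eqn 2}, but now the bound in \prettyref{lem:2k mmt} is only available when $Y^k \leq T^{1/3}$. Thus the choice $k = \lfloor A^2/(e\Cr{2k.moment} V_Y)\rfloor$ must additionally satisfy $k \leq \log T/(3\log Y)$, which, after absorbing constants, is exactly the upper bound imposed on $A^2$ in \eqref{condition A}. Under both halves of \eqref{condition A} the optimal $k$ is therefore an admissible positive integer, and the optimization yields the claimed exponential decay $\exp(-A^2/(\Cr{large.dev.2}V_Y))$.

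The content here is essentially routine once the moment bounds of \prettyref{lem:2k mmt} are in hand; the only real point of care is the bookkeeping in the two-sided constraint on $A^2$, which simultaneously guarantees that $\lfloor k^{\ast}\rfloor \geq 1$ (so Markov is non-trivial) and that $Y^{k^{\ast}} \leq T^{1/3}$ (so that \eqref{lemma 2.5 eqn 1} is applicable). No such constraint appears in the random cases because \eqref{lemma 2.5 eqn 2} and \eqref{lemma 2.5 eqn 3} impose no arithmetic restriction between $k$ and $Y$ or $T$.
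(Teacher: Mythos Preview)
Your proposal is correct and follows essentially the same approach as the paper's proof: Markov's inequality combined with the $2k$-th moment bounds of \prettyref{lem:2k mmt}, then choosing $k$ of order $A^{2}/\bigl(\Cr{2k.moment}\sum_{p\leq Y}p^{-2\sigma}\bigr)$, with the two-sided condition \eqref{condition A} ensuring that this $k$ is a positive integer satisfying $Y^{k}\leq T^{1/3}$. The paper picks $k=\bigl\lfloor A^{2}/(10\,\Cr{2k.moment}\sum_{p\leq Y}p^{-2\sigma})\bigr\rfloor$ and bounds the ratio by $e^{-1}$ rather than literally optimizing, but this is the same argument. One small remark: your final paragraph says that ``no such constraint appears in the random cases,'' whereas earlier you invoked the lower half of \eqref{condition A} to guarantee $\lfloor k^{\ast}\rfloor\geq 1$ in those very cases; to match the lemma's ``without constraints'' clause you should note separately that when $A^{2}$ is small compared to $\sum_{p}p^{-2\sigma}$ the asserted bound is essentially $1$ and can be recovered from $k=1$ (or is trivial) after enlarging $\Cr{large.dev.2}$.
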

\begin{proof}
By Lemma \ref{lem:2k mmt} we see that
\begin{align*}
\frac{1}{T}& \meas\left\{t \in [T,2T] :   \left|R_{Y}(\sigma+it)\right|\geq A\right\} \\
 & \leq\frac{1}{A^{2k}}\frac{1}{T}\int_{T}^{2T}\left|R_{Y}(\sigma+it)\right|^{2k}dt  \leq\left(\frac{\Cr{2k.moment}k\sum_{p\leq Y}p^{-2\sigma}}{A^{2}}\right)^{k}\leq e^{-k} 
\end{align*}
holds provided that $Y^{k}\leq T^{1/3}$ and $\Cr{2k.moment}ek\sum_{p\leq Y}p^{-2\sigma}\leq A^{2}$.
These hold if we take
 $$k=\left[ \frac{A^2}{ 10 \Cr{2k.moment}\sum_{p \leq Y}  p^{-2\sigma}} \right]$$
  and assume 
$$A^{2}\leq \frac{10   \Cr{2k.moment}  \log T}{  3\log Y   }\left(\sum_{p\leq Y}p^{-2\sigma}\right).$$  
Thus,
\begin{align*}
\frac{1}{T}  \meas\left\{t \in [T,2T] :   \left|R_{Y}(\sigma+it)\right|\geq A\right\} & \leq \exp\left( - \frac{A^2}{ 10 \Cr{2k.moment}\sum_{p\leq Y} p^{-2\sigma}  } +1  \right) \\
 & \leq \exp\left( - \frac{A^2}{  \Cr{large.dev.2}\sum_{p\leq Y} p^{-2\sigma}  }   \right) 
 \end{align*}
 for some large $ \Cr{large.dev.2} >0$.
\end{proof}
\begin{lem}\cite[Lemma 3.4]{Ts}
\label{lem:RY.powers} 
Let $ 1/2 \leq \sigma \leq 1 $ and $Y \geq 10 $. For any positive integers $k$ and $\ell$ such that $ k + \ell \leq  \frac{ \log T}{ 6 \log Y}$, we have 
\begin{align*}
\frac{1}{T}\int_{T}^{2T}& R_{Y}(\sigma+it)^{k}\overline{R_{Y}(\sigma+it)}^{\ell}dt \\
& =\mathbf{E}\left[  R_{Y}(\sigma,X)^{k}\overline{R_{Y}(\sigma,X)}^{\ell}\right]  +O\left(    \frac{  (C(k+\ell)Y)^{(k+\ell)/2}   }{T} \right).
\end{align*}
\end{lem}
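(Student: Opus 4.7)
My plan is to expand both sides as Dirichlet series indexed by tuples of prime powers, show that the ``diagonal'' contributions coincide thanks to the independence of the $X(p)$, and estimate the remaining off-diagonal time integral by a Montgomery--Vaughan mean value inequality.

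First I would write
$$
R_Y(s)^k \;=\; \sum_{n\geq 1} \frac{\alpha_k(n)}{n^s}, \qquad \alpha_k(n) := \sum_{\substack{n=p_1^{j_1}\cdots p_k^{j_k}\\ p_r^{j_r}\leq Y}} \frac{1}{j_1\cdots j_k},
$$
so $\alpha_k(n)\geq 0$ is supported on $n\leq Y^k$, and analogously for $R_Y(\sigma,X)^k$ with an extra factor $X(p_1)^{j_1}\cdots X(p_k)^{j_k}$ in each term. Using the independence of $\{X(p)\}_{p}$ and the identity $\mathbf{E}[X(p)^a\overline{X(p)}^b]=\mathbf{1}[a=b]$, the cross moment $\mathbf{E}[R_Y(\sigma,X)^k \overline{R_Y(\sigma,X)}^\ell]$ retains only those pairs of prime-power tuples with $\prod_r p_r^{j_r}=\prod_s q_s^{i_s}$, i.e.\ with matching product $n=m$, and hence equals $\sum_n \alpha_k(n)\alpha_\ell(n)/n^{2\sigma}$. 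On the analytic side the expansion gives
$$
\frac{1}{T}\int_T^{2T} R_Y^k\,\overline{R_Y^\ell}\,dt \;=\; \sum_n\frac{\alpha_k(n)\alpha_\ell(n)}{n^{2\sigma}} + \frac{1}{T}\sum_{n\neq m}\frac{\alpha_k(n)\alpha_\ell(m)}{(nm)^\sigma}\int_T^{2T}(m/n)^{it}\,dt,
$$
so the main terms already coincide and only the off-diagonal needs to be controlled.

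Next I would use $|\int_T^{2T}(m/n)^{it}dt|\leq 2/|\log(m/n)|$ and apply the bilinear Montgomery--Vaughan mean value theorem for Dirichlet polynomials (applied to the finite-length sums $R_Y^k$, $R_Y^\ell$ with coefficients $\alpha_k(n)/n^\sigma$, $\alpha_\ell(n)/n^\sigma$), which yields
$$
\Bigl|\frac{1}{T}\int_T^{2T} R_Y^k\,\overline{R_Y^\ell}\,dt - \sum_n\frac{\alpha_k(n)\alpha_\ell(n)}{n^{2\sigma}}\Bigr| \;\ll\; \frac{1}{T}\Bigl(\sum_n \frac{n\,\alpha_k(n)^2}{n^{2\sigma}}\Bigr)^{1/2}\Bigl(\sum_m \frac{m\,\alpha_\ell(m)^2}{m^{2\sigma}}\Bigr)^{1/2}.
$$
Since $\sigma\geq 1/2$ and $n\geq 1$, we have $n^{1-2\sigma}\leq 1$, so each factor is bounded by $\sum_n \alpha_k(n)^2$. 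Running the diagonal argument above at $\sigma=0$ identifies $\sum_n \alpha_k(n)^2=\mathbf{E}[|R_Y(0,X)|^{2k}]$. Splitting $R_Y(0,X)=\sum_{p\leq Y}X(p)+\tfrac12\sum_{p\leq\sqrt{Y}}X(p)^2+\text{(rest)}$, where the ``rest'' is controlled deterministically by $O(Y^{1/3})$, and applying \prettyref{lem:large.sieve} to the first two pieces, gives $\sum_n\alpha_k(n)^2\leq (CkY)^k$ for an absolute constant $C$.

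Combining these ingredients the error is
$$
\ll \frac{1}{T}\sqrt{(CkY)^k\,(C\ell Y)^\ell} \leq \frac{(CY)^{(k+\ell)/2}(k^k\ell^\ell)^{1/2}}{T} \leq \frac{(C(k+\ell)Y)^{(k+\ell)/2}}{T},
$$
using $k^k\ell^\ell\leq (k+\ell)^{k+\ell}$ at the last step. I expect the off-diagonal estimate to be the main obstacle: any bound based only on $|\log(m/n)|^{-1}\leq \max(m,n)$ factorizes as $(\sum_n \alpha_k(n)\,n^{1-\sigma})(\sum_m\alpha_\ell(m)\,m^{-\sigma})$, which is larger than the target by a factor of roughly $Y^{(k+\ell)/2}$, so the square-root saving provided by the Montgomery--Vaughan inequality is essential. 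The hypothesis $k+\ell\leq(\log T)/(6\log Y)$ enters precisely to guarantee that the resulting error term is indeed smaller than the leading diagonal contribution.
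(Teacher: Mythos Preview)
The paper does not give its own proof of this lemma; it merely cites Tsang's thesis \cite[Lemma~3.4]{Ts} and moves on. So there is nothing to compare your argument against in the paper itself.

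Your proof is correct. Expanding $R_Y(s)^k=\sum_n\alpha_k(n)n^{-s}$, identifying the diagonal $n=m$ with the random expectation via $\mathbf{E}[\tilde X(n)\overline{\tilde X(m)}]=\mathbf 1[n=m]$, and then controlling the off-diagonal by the polarized Montgomery--Vaughan mean value theorem is exactly the right mechanism for obtaining the $\ell^2$-type saving $(C(k+\ell)Y)^{(k+\ell)/2}/T$ rather than the cruder $(CY)^{k+\ell}/T$ that a pointwise bound on $|\log(m/n)|^{-1}$ would give. The bilinear form of Montgomery--Vaughan you need does follow by polarization from the standard quadratic version: if $B(a,a):=\int_T^{2T}|\sum a_n n^{-it}|^2\,dt-T\sum|a_n|^2$ satisfies $|B(a,a)|\ll\sum n|a_n|^2$, then scaling $a\mapsto\lambda a$, $b\mapsto\lambda^{-1}b$ and optimizing gives $|B(a,b)|\ll(\sum n|a_n|^2)^{1/2}(\sum m|b_m|^2)^{1/2}$. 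Your identification $\sum_n\alpha_k(n)^2=\mathbf E[|R_Y(0,X)|^{2k}]$ and the bound $(CkY)^k$ via \prettyref{lem:large.sieve} (whose random part needs no lower bound on $\sigma$) is also sound; note that the proof of \prettyref{lem:2k mmt} as written uses $\log\zeta(3\sigma)$ for the tail, so your explicit treatment of the ``rest'' as $O(Y^{1/3})$ at $\sigma=0$ is the right fix.

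One small remark: your closing sentence about the role of the hypothesis $k+\ell\le(\log T)/(6\log Y)$ is not quite right. Your argument never uses it --- the error bound $(C(k+\ell)Y)^{(k+\ell)/2}/T$ holds unconditionally --- and the hypothesis is present only so that this error is actually small in the applications (and possibly because Tsang's original proof used a different off-diagonal estimate).
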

 \begin{prop}\cite[Proposition 2.3]{LLR}
\label{prop:exp.z1RYz2RY}  Let
$1/2<\sigma\leq3/4$, and $L,$ $A,$ $Y$, $T\geq3$ be real numbers.
Let $\mathcal{E}$ be the set of $t\in[T,2T]$ with $\left|R_{Y}(\sigma+it)\right|\geq A$ and let $ \mathcal{E}' = [T,2T] \setminus \mathcal{E}$. Suppose that $A$ satisfies \eqref{condition A}, so that 
$$| \mathcal{E} | \leq T \exp\left(-\frac{A^{2}}{\Cr{large.dev.2}\sum_{p\leq Y}p^{-2\sigma}}\right)   .$$
 Then, for complex numbers $z_{1}$, $z_{2}$ with $|z_{i}|\leq L$
for $i=1$, $2$,
\begin{align*}
\frac{1}{T}&\int_{ \mathcal{E}' }   \exp\left(z_{1}R_{Y}(\sigma+it)+z_{2}\overline{R_{Y}(\sigma+it)}\right)dt\\
 = & \mathbf{E}\left[ \exp\left(z_{1}R_{Y}(\sigma,X)+z_{2}\overline{R_{Y}(\sigma,X)}\right)\right]  +O\left(e^{-N}+e^{-A^{2}/\C\sum_{p\leq Y}p^{-2\sigma}}+\frac{1}{\sqrt{T}}\right)
\end{align*}
under the condition that there exists a suitable parameter $N$ satisfying
\[
  L^2 \sum_{p\leq Y}p^{-2\sigma} \ll N , \quad  \sqrt{N}L\sum_{p\leq Y}p^{-2\sigma} \ll A \ll N, \quad  (L^2 Y)^{N}   \leq T.
\]
\end{prop}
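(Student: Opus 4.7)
The plan is to attack this by Taylor expanding $\exp(z_1 w + z_2 \bar w) = \sum_{k,\ell \geq 0}\frac{z_1^k z_2^\ell}{k!\ell!} w^k \bar w^\ell$ up to total degree $N$ on both sides, matching the resulting empirical moments $\frac{1}{T}\int_T^{2T} R_Y^k \overline{R_Y}^\ell \, dt$ with the random moments $\mathbf{E}[R_Y(\sigma, X)^k \overline{R_Y(\sigma, X)}^\ell]$ via Lemma~\ref{lem:RY.powers}, and separately controlling three sources of error: the Taylor tail, the moment-matching error from Lemma~\ref{lem:RY.powers}, and the replacement of $\frac{1}{T}\int_{\mathcal{E}'}$ by $\frac{1}{T}\int_T^{2T}$. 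These three errors produce precisely the three terms $e^{-N}$, $1/\sqrt{T}$, and $e^{-A^2/C P_\sigma}$ in the stated bound.

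For the Taylor tails, using $\sum_{k+\ell=n}\frac{1}{k!\ell!} = \frac{2^n}{n!}$ and $|z_i| \leq L$, the $n > N$ part of the series is bounded in absolute value by $\sum_{n > N}\frac{(2L)^n}{n!}|R_Y|^n$. Averaging over $t$ or integrating against $\mathbf{P}$ and applying Lemma~\ref{lem:2k mmt} (with Cauchy--Schwarz to reduce odd moments to even ones) gives, on both sides,
\[ \sum_{n > N}\frac{(2L)^n}{n!}\bigl(\Cr{2k.moment}\,n\,P_\sigma\bigr)^{n/2} \ll \sum_{n > N}\Bigl(\frac{2eL\sqrt{\Cr{2k.moment}\,P_\sigma}}{\sqrt{n}}\Bigr)^{n}, \]
where $P_\sigma := \sum_{p \leq Y} p^{-2\sigma}$ and Stirling's formula has been used. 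The hypothesis $L^2 P_\sigma \ll N$ makes this a geometric tail with ratio $\leq 1/e$ at $n = N+1$, and hence the Taylor tail is $O(e^{-N})$ on both sides.

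For the truncated sums themselves, Lemma~\ref{lem:RY.powers} applies to each $(k,\ell)$ with $k+\ell \leq N$ since $(L^2 Y)^N \leq T$ forces $N \leq \log T/(6\log Y)$. Multiplying its error $O\bigl((C(k+\ell)Y)^{(k+\ell)/2}/T\bigr)$ by $L^{k+\ell}/(k!\ell!)$ and summing in $n = k+\ell \leq N$ yields, via a Stirling estimate analogous to the previous paragraph, a total contribution $O(1/\sqrt{T})$. The remaining correction between $\frac{1}{T}\int_{\mathcal{E}'} R_Y^k \overline{R_Y}^\ell \, dt$ and $\frac{1}{T}\int_T^{2T} R_Y^k \overline{R_Y}^\ell \, dt$ is estimated by Cauchy--Schwarz together with Lemma~\ref{lem:2k mmt}:
\[ \Bigl|\frac{1}{T}\int_{\mathcal{E}} R_Y^k \overline{R_Y}^\ell \, dt\Bigr| \leq \Bigl(\frac{|\mathcal{E}|}{T}\Bigr)^{1/2} \bigl(\Cr{2k.moment}(k+\ell)P_\sigma\bigr)^{(k+\ell)/2}; \]
then Lemma~\ref{lem:weak.large.dev} (applicable because $\sqrt{N}LP_\sigma \ll A \ll N$ places $A$ safely inside the large-deviation regime) yields $|\mathcal{E}|/T \leq \exp(-A^2/(\Cr{large.dev.2} P_\sigma))$, and summing against $L^{k+\ell}/(k!\ell!)$ reproduces the same geometric Taylor sum as above to give the final error $O(\exp(-A^2/(C P_\sigma)))$. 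The most delicate step, and where I expect the main obstacle to sit, is precisely the bookkeeping of these Stirling estimates: the $n^{n/2}$ growth in Lemma~\ref{lem:RY.powers} is only just dominated by the factorial in the Taylor denominator under $(L^2 Y)^N \leq T$, and the geometric Taylor tails only close with room to spare under $L^2 P_\sigma \ll N$; everything else is a careful collation of these bounds.
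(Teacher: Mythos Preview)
Your approach is essentially the paper's: Taylor expand the exponential, match the truncated moments via Lemma~\ref{lem:RY.powers}, and handle the three errors (Taylor tail, moment matching, restriction to $\mathcal{E}'$) separately. Writing the expansion as a double series in $R_Y,\overline{R_Y}$ rather than as powers of $P=z_1R_Y+z_2\overline{R_Y}$ is purely cosmetic, since $\sum_{k+\ell=n}\tfrac{1}{k!\ell!}z_1^kz_2^\ell R_Y^k\overline{R_Y}^\ell = P^n/n!$.

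One step does not go through as written. For the \emph{empirical} Taylor tail you invoke Lemma~\ref{lem:2k mmt} to bound $\tfrac{1}{T}\int_T^{2T} |R_Y(\sigma+it)|^{n}\,dt$ for every $n>N$, but the empirical half of that lemma carries the hypothesis $Y^{k}\leq T^{1/3}$, which fails once $n$ is large; the condition $(L^2Y)^N\leq T$ controls $n\leq N$, not the tail. The paper sidesteps this by using that on $\mathcal{E}'$ one has $|R_Y|<A$ \emph{pointwise}, so the Taylor remainder of $\exp(P)$ is bounded by $\sum_{n\geq N}(2LA)^n/n!=O(e^{-N})$ directly (this is where the hypothesis $A\ll N$ is used). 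Your moment-bound argument is fine on the random side, since the random half of Lemma~\ref{lem:2k mmt} has no restriction on $k$; indeed that is exactly how the paper handles the random tail. With this one substitution your outline matches the paper's proof.
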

\begin{proof}
For simplicity, we put $P(\sigma+it)=z_{1}R_{Y}(\sigma+it)+z_{2}\overline{R_{Y}(\sigma+it)}$
and $P_{\textrm{rand}}(\sigma, X)=z_{1}R_{Y}(\sigma,X)+z_{2}\overline{R_{Y}(\sigma,X)}$.
For $|P|\leq A$, we have
\[
\exp(P)=\sum_{n<N}\frac{1}{n!}P^{n}+O\left(\frac{A^{N}}{N!}\right).
\]
By taking $N\geq10A$ and using Stirling's formula, the error term
is $O(e^{-N})$. Hence,
$$ \frac{1}{T} \int_{ \mathcal{E}' } \exp(P( \sigma+it)) dt = \sum_{n<N}\frac{1}{n!} \frac{1}{T} \int_{ \mathcal{E}' } P(\sigma+it)^{n} dt +O(T^{-1} e^{-N}) .$$

Next, we want to add the integrals of $P^n$ on the set $\mathcal{E}$. By the Cauchy-Schwarz inequality, Lemmas \ref{lem:2k mmt}--\ref{lem:weak.large.dev}, and the inequality 
$$|P(\sigma+it)|^{2}\leq4L^{2}\left|R_{Y}(\sigma+it)\right|^{2}, $$
  we have 
\begin{align*}
\frac{1}{T} \int_{\mathcal{E}}\left|P(\sigma+it)\right|^{n}dt & \leq\sqrt{\frac{1}{T}\meas(\mathcal{E})}\left(\frac{1}{T} \int_{T}^{2T}\left|P(\sigma+it)\right|^{2n}dt\right)^{1/2}\\
 & \leq\exp\left(-\frac{A^{2}}{2\Cr{large.dev.2}\sum_{p\leq Y}  p^{-2\sigma}}\right)\left(4\Cr{2k.moment}NL^{2}\sum_{p\leq Y}p^{-2\sigma}\right)^{n} 
\end{align*}
 for $n<N$ and for $ A$ satisfying the conditions in Lemma \ref{lem:weak.large.dev}, thus we see that 
 \begin{align*}
 \frac{1}{T}   \int_{ \mathcal{E}' } \exp(P( \sigma+it)) dt 
 = &   \sum_{n<N}\frac{1}{n!} \frac{1}{T} \int_T^{2T} P(\sigma+it)^{n} dt + O \left( \frac{e^{-N}}{T}  \right)  \\
 & +O\left(   \exp\left(-\frac{A^{2}}{2\Cr{large.dev.2}\sum_{p\leq Y}  p^{-2\sigma}} + 4\Cr{2k.moment}NL^{2}\sum_{p\leq Y}p^{-2\sigma}\right)    \right) .
 \end{align*}
If we choose $A$ so that 
\[
A\geq4\sqrt{\Cr{large.dev.2}\Cr{2k.moment}N}L\sum_{p\leq Y}p^{-2\sigma},
\]
we have
\[
\frac{1}{T}\int_{\mathcal{E}' }\exp(P(\sigma+it))dt=\sum_{n<N}\frac{1}{n!}\frac{1}{T}\int_{T}^{2T}P(\sigma+it)^{n}dt+O\left(e^{-N}+e^{-A^{2}/ ( 4\Cr{large.dev.2}\sum_{p \leq Y}  p^{-2\sigma}) }\right).
\]
 By Lemma \ref{lem:RY.powers} we see that
\begin{align*}
\frac{1}{T}\int_{T}^{2T}P(\sigma+it)^{n}dt & =\sum_{k+\ell=n}{n \choose k}z_{1}^{k}z_{2}^{\ell}\frac{1}{T}\int_{T}^{2T}R_{Y}(\sigma+it)^{k}\overline{R_{Y}(\sigma+it)}^{\ell }dt\\
 & =\mathbf{E}\left[ P_{\textrm{rand}}(\sigma,X)^{n}\right] +O\left(\frac{1}{T}(4CnL^2 Y)^{n/2}\right)
\end{align*}
and thus
\begin{align*}
\frac{1}{T}\int_{\mathcal{E}'  }\exp(P(\sigma+it))dt= & \sum_{n<N}\frac{1}{n!}\mathbf{E}\left[ P_{\textrm{rand}}(\sigma,X)^{n}\right] +O\left(e^{-N}+e^{-A^{2}/4\Cr{large.dev.2}\sum p^{-2\sigma}}\right)\\
 & +O\left(\frac{1}{T}\sum_{n<N}\frac{(4CnL^2 Y)^{n/2}}{n!}\right)
\end{align*}
If we assume that $(L^2 Y)^{N}\leq T$, then the last $O$-term is $O(1/\sqrt{T})$.

Finally, we want to add the tail of the above sum. By the inequalty $\left|P_{\textrm{rand}}(\sigma,X)\right|^{2}\leq4L^{2}\left|R_{Y}(\sigma,X)\right|^{2}$ and Lemma \ref{lem:2k mmt}, we see that  
\[
\sum_{n\geq N}\frac{1}{n!}\mathbf{E}\left[ \left|P_{\textrm{rand}}(\sigma,X)\right|^{n}\right] \leq\sum_{n\geq N}\left(\frac{\C  L\sqrt{\sum_{p\leq Y}p^{-2\sigma}}}{\sqrt{n}}\right)^{n}\leq e^{-N}
\]
if $N\gg L^2 \sum_{p\leq Y}p^{-2\sigma}.$ Therefore,
\begin{align*}
\frac{1}{T}\int_{\mathcal{E}'  }\exp(P(\sigma+it))dt= & \mathbf{E}\left[ \exp( P_{\textrm{rand}}(\sigma,X)) \right] +O\left(e^{-N}+e^{-A^{2}/4\Cr{large.dev.2}\sum p^{-2\sigma}}   +  \frac{1}{ \sqrt{T}} \right).
\end{align*}
\end{proof}

Now, we adjust the parameters in the proposition. Let $\sigma_T = 1/2+(\log T)^{-\theta}$, $ \sigma_T \leq \sigma \leq 3/4$, $A=(\log T)^{\theta_{A}}$,
$L= ( \log T)^{ \theta_L}$, $N=(\log T)^{\theta_{N}}$ and $Y=\exp\left((\log T)^{\theta_{1}}\right)$. Then the conditions are met for sufficiently large $T$
 if
\[
  2 \theta_{L}< \theta_N ,\qquad  \theta_{L}+\frac{\theta_{N}}{2}< \theta_A < \theta_N,\qquad\theta_{1}+\theta_{N}<1, \qquad 2 \theta_A < 1- \theta_1 .
\]
They hold if we assume $ \theta_L < (1-\theta_1 )/4$ and let $ \theta_A = \tfrac12 \theta_L + \tfrac38 (1-\theta_1)$ and $ \theta_N = \tfrac12 (1-\theta_1 ) $.  By Proposition \ref{prop:exp.z1RYz2RY} with these choices, we see that there is a constant $0< \delta <1 $ and a set $\mathcal{E} \subset [T,2T]$ with
$$| \mathcal{E} | \leq T e^{ - ( \log T)^\delta }  ,$$ 
such that
\begin{equation}\label{proof 3.1 eqn 1}\begin{split}
\frac{1}{T}\int_{ [T,2T]\setminus \mathcal{E}}  & \exp\left(z_{1}R_{Y}(\sigma+it)+z_{2}\overline{R_{Y}(\sigma+it)}\right)dt \\
 & = \mathbf{E}\left[ \exp\left(z_{1}R_{Y}(\sigma,X)+z_{2}\overline{R_{Y}(\sigma,X)}\right)\right]  +O(  e^{ - ( \log T)^\delta } ) 
\end{split}\end{equation}
holds for any $|z_1|, |z_2 | \leq (\log T)^{\theta_L} $.  Let $\mathcal{E}_{1}$ be the set of $t\in[T,2T]$ for which the approximation
of \prettyref{lem:log.approx} fails. By \prettyref{lem:log.approx} we see that
\[
\log\zeta(\sigma+it)=R_{Y}(\sigma+it)+O\left(\exp\left(-\frac{1}{2}(\log T)^{(\theta_{1}-\theta)/2}\right)\right)
\]
for all $ t \in [T, 2T] \setminus \mathcal{E}_1 $ and 
$$ | \mathcal{E}_1| \ll    T \exp\left(-\frac{1}{3}(\log T)^{ 1-\theta }\right)  $$
provided that $ \theta_1 < 1- \theta  $. Thus, assuming  $ \theta < \theta_1 < 1-\theta $, there is a $0< \delta < 1 $ such that 
\begin{equation}\label{proof 3.1 eqn 2}
\log\zeta(\sigma+it)=R_{Y}(\sigma+it)  +O(  e^{ - ( \log T)^\delta } )  
\end{equation}
for all $ t \in  [T, 2T] \setminus \mathcal{E}_1 $ and 
$$ | \mathcal{E}_1| \ll    T  e^{ - ( \log T)^\delta }. $$
Hence, we only need the assumptions $ \theta_L < (1-\theta_1)/4$ and $ \theta < \theta_1 < 1-\theta $ for the discussion in this paragraph.

Let $ 0 < \theta < 1/2$, $ 0 < \theta_L < ( 1-\theta)/4 $ and $ \theta_1 = \theta + \min \{  ( 1-\theta)/4-\theta_L , (1/2-\theta)/2 \} $. These choices imply the two conditions $ \theta_L < (1-\theta_1)/4$ and $ \theta < \theta_1 < 1-\theta $ above, so that \eqref{proof 3.1 eqn 1} and \eqref{proof 3.1 eqn 2} hold for all $ t \in [T, 2T] \setminus (\mathcal{E} \cup \mathcal{E}_1 )$ with a small exceptional set $ |\mathcal{E} \cup \mathcal{E}_1|  \ll    T  e^{ - ( \log T)^\delta }  $. Thus, by \eqref{proof 3.1 eqn 2} 
\begin{align*}
\widehat{\Phi}_T (u,v)=&  \frac{1}{T}\int_{T}^{2T}   \exp\left(2\pi i\left(u\Re R_{Y}(\sigma+it)+v\Im R_{Y}(\sigma+it)\right)\right)dt  \\
&  +O( L e^{ - ( \log T)^\delta })
\end{align*}
for all $ |u|, |v| \leq L = ( \log T)^{\theta_L} $. By \eqref{proof 3.1 eqn 1} we have 
\begin{align*}
\widehat{\Phi}_T (u,v)=&  \mathbf{E} \left[    \exp\left(2\pi i\left(u\Re R_{Y}(\sigma, X )+v\Im R_{Y}(\sigma,X )\right)\right) \right]   +O( L e^{ - ( \log T)^\delta })
\end{align*}
for all $ |u|, |v| \leq L = ( \log T)^{\theta_L} $. By Lemma 4.1 in \cite{LLR} we finally prove that
\begin{align*}
\widehat{\Phi}_T (u,v)= \widehat{\Phi}_{\textrm{rand}}(u,v)  +O(   e^{ - ( \log T)^\delta })
\end{align*}
holds for all $ |u|, |v| \leq L = ( \log T)^{\theta_L} $ with a smaller choice of $\delta >0$. This concludes the proof of Theorem \ref{thm:phi.hat}.

\section{Estimates on the randomized zeta function}\label{section:asymp}

 We first present an asymptotic expansion of the density function for $\log \zeta(\sigma,X)$.  \begin{prop}
\label{prop:density.randomized}Let $\sigma$ be a real number with
$1/2<\sigma\leq3/4$. 
\begin{enumerate}
\item There is a smooth density function $F_{\sigma}(x,y)$ such that for
any region $\mathcal{B}$
\[
\mathbf{P}\left[ \log\zeta(\sigma,X)\in\mathcal{B}\right] =\iint_{\mathcal{B}}F_{\sigma}(x,y)dxdy.
\]
\item $F_{\sigma}(x,y)$ has an asymptotic series expansion
\[
F_\sigma (x,y)= 
\sum_{\substack{m,n\geq0\\ m+n\leq5}
}\frac{c_{m,n}(1/ \sqrt{\psi(\sigma) }) }{\psi(\sigma)^{m+n+1}}x^{m}y^{n}e^{-(x^{2}+y^{2})/\psi(\sigma)}+O\left(\frac{1}{\psi^{4}(\sigma)}\right),
\]
where $c_{m,n}(\alpha)$ is a polynomial in $\alpha$ with $ c_{0,0}(0) = \pi^{-1}$  and
\[
\psi(\sigma)\coloneqq\sum_{p,k}\frac{1}{k^{2}p^{2k\sigma}}=\log\frac{1}{\sigma-1/2}+O(1).
\]
\end{enumerate}
\end{prop}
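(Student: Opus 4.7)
The plan is to obtain $F_\sigma$ via Fourier inversion of the characteristic function
\[
\widehat{F}_\sigma(u,v) := \mathbf{E}\bigl[\exp\bigl(2\pi i(u\Re\log\zeta(\sigma,X) + v\Im\log\zeta(\sigma,X))\bigr)\bigr],
\]
and to produce the asymptotic expansion by expanding $\widehat{F}_\sigma$ around its Gaussian approximation. Part (1) will follow once one establishes that $\widehat{F}_\sigma(u,v)$ decays faster than any polynomial in $\sqrt{u^2+v^2}$; the density is then $F_\sigma(x,y) = \iint \widehat{F}_\sigma(u,v) e^{-2\pi i(ux+vy)}\,du\,dv$ and is smooth. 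The required decay will be a by-product of the single-prime stationary-phase bounds described below.

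By independence of $\{X(p)\}_p$, the characteristic function factors as $\widehat{F}_\sigma(u,v) = \prod_p \phi_p(u,v;\sigma)$, where, writing $X(p)=e^{i\theta}$ with $\theta$ uniform on $[0,2\pi)$,
\[
\phi_p(u,v;\sigma) = \frac{1}{2\pi}\int_0^{2\pi}\exp\biggl(2\pi i\sum_{k\geq 1}\frac{u\cos k\theta + v\sin k\theta}{k\,p^{k\sigma}}\biggr)d\theta.
\]
The one-dimensional analogues of these integrals are the modified Bessel-type quantities analyzed in \cite[Section 4]{Ts}; I would generalize those expansions to this joint real/imaginary setting. Expanding $\log\phi_p$ in powers of $1/p^\sigma$ yields
\[
\log\phi_p(u,v;\sigma) = -\pi^2(u^2+v^2)\psi_p(\sigma) + \sum_{j\geq 2}\frac{R_j(u,v)}{p^{2j\sigma}},
\]
with $\psi_p(\sigma) = \sum_{k\geq 1} k^{-2}p^{-2k\sigma}$ (so that $\sum_p \psi_p(\sigma)=\psi(\sigma)$) and $R_j$ explicit polynomials in $u,v$ of bounded degree. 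Summing over primes gives
\[
\log\widehat{F}_\sigma(u,v) = -\pi^2(u^2+v^2)\psi(\sigma) + \sum_{j\geq 2}S_j(u,v),
\]
where each prime sum $S_j(u,v)$ converges absolutely since $2j\sigma>1$ for $j\geq 2$. Exponentiating the correction factor and multiplying by the leading Gaussian $e^{-\pi^2(u^2+v^2)\psi(\sigma)}$, whose inverse Fourier transform is $(\pi\psi(\sigma))^{-1}e^{-(x^2+y^2)/\psi(\sigma)}$, and then inverting term by term produces polynomial-times-Gaussian contributions. After rescaling $u\mapsto u/\sqrt{\psi(\sigma)}$, each correction is suppressed by a negative power of $\sqrt{\psi(\sigma)}$, so truncating at the point where the remainder is $O(1/\psi(\sigma)^4)$ leaves only finitely many terms, matching the claimed form with coefficients $c_{m,n}(\alpha)$ polynomial in $\alpha = 1/\sqrt{\psi(\sigma)}$, and with $c_{0,0}(0)=1/\pi$ arising from the normalization of the inverse Fourier transform.

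The main obstacle is the tail of the Fourier integral, where $\sqrt{u^2+v^2}$ is so large that the Taylor expansion of $\log\phi_p$ fails. For this region, direct decay estimates are needed. These can be produced by stationary phase applied to each $\phi_p$: its phase has nondegenerate critical points whenever $\sqrt{u^2+v^2}/p^\sigma$ is not too small, giving the pointwise bound $|\phi_p(u,v;\sigma)|\ll (1+\sqrt{u^2+v^2}/p^\sigma)^{-1/2}$. Multiplying these over primes $p\ll (u^2+v^2)^{1/(2\sigma)}$ yields enough decay of $|\widehat{F}_\sigma(u,v)|$ to render the tail contribution to the Fourier inversion negligible compared with $O(1/\psi(\sigma)^4)$, simultaneously securing Part (1). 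A secondary hurdle is the bookkeeping needed to identify the polynomial coefficients $c_{m,n}(\alpha)$ explicitly and to verify that the pooled remainder after truncation at total degree $5$ is uniform in $\sigma\in(1/2,3/4]$; this amounts to checking that each $S_j(u/\sqrt{\psi},v/\sqrt{\psi})$ expands into integer powers of $1/\sqrt{\psi(\sigma)}$ with prime-sum coefficients that remain bounded as $\sigma\to 1/2^+$.
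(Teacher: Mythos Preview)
Your plan is essentially the same as the paper's. The paper also factors the characteristic function as $\prod_p J(\pi u,\pi v,p^{-\sigma})$, expands $\log J$ as a series in $z,\bar z$ with coefficients $b_{k,l}(p^{-\sigma})$ (playing the role of your $R_j$), uses a van der Corput bound $|J|\ll (w|z|)^{-1/2}$ on the individual factors to get tail decay of $\widehat{\Phi}_{\mathrm{rand}}$, and then Fourier-inverts the Gaussian-times-polynomial main term via the substitution $\tilde u=\sqrt{\psi}(u+ix/\pi\psi)$, $\tilde v=\sqrt{\psi}(v+iy/\pi\psi)$.

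One point to tighten: the prime sums $\sum_p b_{k,l}(p^{-\sigma})$ still depend on $\sigma$ directly, not only through $\psi(\sigma)$, so as written your $c_{m,n}$ would be functions of $\sigma$ rather than polynomials in $\alpha=1/\sqrt{\psi(\sigma)}$ alone. The paper fixes this by observing that $\tilde a_{k,l}(\sigma)=\tilde a_{k,l}(1/2)+O(\sigma-1/2)$ and replacing the coefficients by their values at $\sigma=1/2$; the $O(\sigma-1/2)$ discrepancy, multiplied by $\psi^{-5/2}$ from the lowest-order correction, is absorbed into the $O(\psi^{-4})$ error. Your last sentence gestures at this but does not quite say it.
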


\prettyref{thm:asymp} is an easy consequence of the above proposition. We shall prove \prettyref{thm:asymp} here.
\begin{proof}[Proof of \prettyref{thm:asymp}]
By Proposition  \ref{prop:density.randomized} we see that 
\begin{align*}
 \mathbf{P}  \left[ \kappa(\sigma_T ,X)\in[a,b] \times [c,d]\right]
&= \int_{c \sqrt{ \pi \psi_T }}^{d  \sqrt{ \pi \psi_T }} \int_{a \sqrt{ \pi \psi_T }}^{b \sqrt{ \pi \psi_T }} F_{\sigma_T} ( x,y) dx dy \\
&=\pi \psi_T \int_c^d \int_a^b   F_{\sigma_T} (x \sqrt{ \pi \psi_T },y\sqrt{ \pi \psi_T }) dx dy.
\end{align*}
Since
 \[
F_{\sigma_T} (x,y)=\sum_{\substack{m,n\geq0\\
m+n\leq5
}
}\frac{c_{m,n}(1/ \sqrt{\psi_T }) }{\psi_T^{m+n+1}}x^{m}y^{n}e^{-(x^{2}+y^{2})/\psi_T}+O\left(\frac{1}{\psi_T^{4} }\right) ,
\]
we find that
\begin{align*}
 \mathbf{P} & \left[ \kappa(\sigma_T ,X)\in[a,b] \times [c,d]\right] \\ 
&=    \sum_{\substack{m,n\geq0\\
m+n\leq5
}
}\frac{c_{m,n}(1/ \sqrt{\psi_T }) \sqrt{\pi}^{m+n+2} }{ \sqrt{\psi_T}^{m+n }} \int_c^d \int_a^b  x^{m}y^{n}e^{- \pi(x^{2}+y^{2}) } dx dy + O \bigg(  \frac{1}{ \psi_T^3}\bigg).
\end{align*}
Write $ c_{m,n} (\alpha) = \sum_{\ell=0}^\infty c_{m,n,\ell} \alpha^\ell$, then
\begin{align*}
 \mathbf{P} & \left[ \kappa(\sigma_T ,X)\in[a,b] \times [c,d]\right] \\ 
&=  \sum_{\ell \geq 0 }   \sum_{\substack{m,n\geq0\\
m+n\leq5
}
}\frac{c_{m,n,\ell } \sqrt{\pi}^{m+n+2} }{ \sqrt{\psi_T}^{m+n +\ell }} \int_c^d \int_a^b  x^{m}y^{n}e^{- \pi(x^{2}+y^{2}) } dx dy + O \bigg(  \frac{1}{ \psi_T^3}\bigg) \\
&=  \sum_{k  \geq 0 } \frac{ 1  }{ \sqrt{\psi_T}^{k}}   \int_c^d \int_a^b  \sum_{m+n \leq k }   c_{m,n,k-m-n } \sqrt{\pi}^{m+n+2}   x^{m}y^{n}e^{- \pi(x^{2}+y^{2}) } dx dy + O \bigg(  \frac{1}{ \psi_T^3}\bigg).
\end{align*}
Define $ g_k ( x,y ) =  \sum_{m+n \leq k }   c_{m,n,k-m-n } \sqrt{\pi}^{m+n+2}   x^{m}y^{n}$, then 
\begin{align*}
 \mathbf{P} & \left[ \kappa(\sigma_T ,X)\in[a,b] \times [c,d]\right] \\ 
&=  \sum_{k  \geq 0 } \frac{ 1  }{ \sqrt{\psi_T}^{k}}   \int_c^d \int_a^b g_k ( x,y )  e^{- \pi(x^{2}+y^{2}) } dx dy + O \bigg(  \frac{1}{ \psi_T^3}\bigg).
\end{align*}
In particular, 
$$ g_0 (x,y) = c_{0,0,0} \pi = c_{0,0}(0) \pi = 1 . $$
\end{proof}

The remaining part of the section is devoted to proving Proposition \ref{prop:density.randomized}. We first want
to understand the Fourier transform of the cumulative distribution
function, or equivalently, certain complex moment of $\zeta(\sigma,X)$.
Recall that 
\[
\hat{\Phi}_{\textrm{rand}}(u,v)=\mathbf{E}\left[\exp\left(2\pi iu\Re\log\zeta(\sigma,X)+2\pi iv\Im\log\zeta(\sigma,X)\right)\right]
\]
for real numbers $u,v$. It can be rewritten as 
\[
\hat{\Phi}_{\textrm{rand}}(u,v)=\mathbf{E}\left[\zeta(\sigma,X)^{\pi i(u-iv)}\zeta(\sigma,\overline{X})^{\pi i(u+iv)}\right].
\] 
Since $X(p)$ are assumed to be independent, we have
\[
\hat{\Phi}_{\textrm{rand}}(u,v)=\prod_{p}J(\pi u,\pi v,p^{-\sigma}),
\]
 where
\begin{equation}\label{def juvw}
J(u,v,w)   =\mathbf{E}\left[\exp(- 2iu\Re \log(1-wX) - 2iv\Im \log (1-wX)\right] 
\end{equation} 
with $z=u+iv$ and $0<w<1$. Since $- \log (1-wX) \approx wX $ for $ w \approx 0 $, we can see that $J(u,v,w)\approx J_{0}(w\left|z\right|)$, where $J_{0}$ is the classical
$J$-Bessel function. So we expect that $J(u,v,w)$ and $J_0 (w|z|)$ share similar properties. 
\begin{lem}
\label{lem:J.properties} Let $u$, $v$, $w$ are real variables
and $0<w<1$, then the followings are true.
\begin{enumerate}
\item $|J(u,v,w)|\leq1$ for all $u$, $v$, $w$.
\item $|J(u,v,w)|\ll1/(w\sqrt{u^{2}+v^{2}})^{1/2}$ for $w\sqrt{u^{2}+v^{2}}\geq1$.
\end{enumerate}
\end{lem}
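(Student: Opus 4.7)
Part (1) is immediate: the exponent in the defining integral is purely imaginary for real $u,v,w$, so the integrand has modulus $1$, yielding $|J(u,v,w)|\leq 1$.

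For Part (2), the plan is to parameterize $X=e^{i\theta}$ and write $u+iv = re^{i\alpha}$ with $r=\sqrt{u^{2}+v^{2}}$, so that
\[
J(u,v,w) = \frac{1}{2\pi}\int_{0}^{2\pi} e^{if(\theta)}\,d\theta,\qquad f(\theta) = -2u\log|1-we^{i\theta}|-2v\arg(1-we^{i\theta}),
\]
and then apply the stationary-phase / van der Corput method. Using $\tfrac{d}{d\theta}\log(1-we^{i\theta}) = -iwe^{i\theta}/(1-we^{i\theta})$ one obtains
\[
f'(\theta) = \frac{-2wr\bigl(\sin(\theta-\alpha)+w\sin\alpha\bigr)}{|1-we^{i\theta}|^{2}},
\]
so $f'$ has exactly two zeros $\theta_{1},\theta_{2}$ in $[0,2\pi)$ (solutions of $\sin(\theta_{j}-\alpha)=-w\sin\alpha$, which exist since $w<1$), and since the numerator differentiates cleanly at a zero,
\[
|f''(\theta_{j})| = \frac{2wr\sqrt{1-w^{2}\sin^{2}\alpha}}{|1-we^{i\theta_{j}}|^{2}}.
\]

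The key structural observation is that $|1-we^{i\theta_{j}}|=\sqrt{1-w^{2}\sin^{2}\alpha}\mp w\cos\alpha$ at the two stationary points, together with the factorization $(\sqrt{1-w^{2}\sin^{2}\alpha})^{2}-(w\cos\alpha)^{2}=1-w^{2}$. From this one obtains the uniform lower bound $|f''(\theta_{j})|\gg wr$: a short calculation shows that the ratio $\sqrt{1-w^{2}\sin^{2}\alpha}/|1-we^{i\theta_{j}}|^{2}$ is at least $1/(1+w)^{2}\geq 1/4$ at the larger of the two values of $|1-we^{i\theta_{j}}|$ and at least $1/\sqrt{1-w^{2}}$ at the smaller. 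The plan is then to split $[0,2\pi]$ into arcs of length $\delta\asymp(wr)^{-1/2}$ around $\theta_{1},\theta_{2}$ together with their complement. On each arc $|f''|\asymp wr$, so the second-derivative van der Corput test contributes $O((wr)^{-1/2})$. On the complement, $|f'|\gg wr\delta\asymp\sqrt{wr}$, and one integration by parts (after subdividing into $O(1)$ pieces on which $f''$ has constant sign) contributes $O((wr)^{-1/2})$ as well. Summing yields $|J(u,v,w)|\ll(w\sqrt{u^{2}+v^{2}})^{-1/2}$.

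The main obstacle will be the regime $w\sin\alpha\to 1$, where the two stationary points nearly coalesce (their separation $\pi-2\arcsin(w\sin\alpha)$ tending to $0$) and several geometric quantities degenerate. Because $w<1$ strictly, the stationary points always remain distinct and the cancellations in the factorization above keep $|f''|$ comparable to $wr$; the integration-by-parts contribution and the size of $f'''$ in the shrinking neighborhoods must be monitored carefully to retain absolute constants. The argument is a direct generalization of the classical Bessel decay $|J_{0}(x)|\ll x^{-1/2}$, to which $J(u,v,w)$ reduces in the limit $w\to 0$ with $x=2wr$, and it follows the treatment of the modified $J$-Bessel function ($v=0$ case) in \cite[\S4]{Ts}.
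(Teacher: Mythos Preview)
Your proposal is correct and follows the same route as the paper: the paper simply observes that part~(1) is immediate from the definition and for part~(2) invokes the van der Corput method, citing Theorem~12 of Jessen--Wintner \cite{JW} rather than writing out the argument. Your sketch is precisely that argument in outline---the stationary-point computation, the lower bound $|f''(\theta_j)|\gg wr$ via the factorization $|1-we^{i\theta_1}|\cdot|1-we^{i\theta_2}|=1-w^2$, and the standard splitting into neighborhoods of the critical points plus their complement---and you correctly flag the coalescing-critical-points regime $w\sin\alpha\to 1$ as the place where uniformity needs care (this is handled in \cite{JW}, and in the paper's applications one always has $w=p^{-\sigma}\le 2^{-1/2}$, so the issue does not arise).
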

\begin{proof}
The first inequality follows by \eqref{def juvw}. The
second is an application of van der Corput method; one can find the
proof in Theorem 12 of \cite{JW}.
\end{proof}

\begin{lem}
\label{lem:coeff.estimate}Let $u$, $v$, $w$ be real numbers and
$0<w<1$. We put $z=u+iv$. Then we have a series expansion
\[
J(u,v,w)=\sum_{k,l\geq0}\frac{i^{k+l}}{k!l!}a_{k,l}(w)z^{k}\bar{z}^{l}
\]
for any $z$. Let $0<r<1$, then there is a constant $\delta_{r}>0$
such that for $|w|\leq r$ and $|z|\leq\delta_{r}$, we have the series
expansion
\[
\log J(u,v,w)=\sum_{k,l\geq1}\frac{i^{k+l}}{k!l!}b_{k,l}(w)z^{k}\bar{z}^{l}.
\]
Moreover, the followings are true.
\begin{enumerate}
\item  $a_{0,0}(w)=1$, $a_{k,0}(w)=a_{0,k}(w)=0$ for $k \geq 1$, 
and $a_{k,l}(w)=a_{l,k}(w) > 0 $ for $k,l \geq  1$.
\item $b_{k,l}(w)$ is real and 
\[
b_{1,1}(w)=a_{1,1}(w)=\sum_{m=1}^\infty \frac{1}{m^{2}}w^{2m}.
\]
\item $a_{k,l}(w)$, $b_{k,l}(w)\ll_{k,l}w^{2\max(k,l)}$ for $k,l \geq 1$..
\item There is a constant $C_{r}$ so that for $0<w\leq r$, 
\end{enumerate}
\begin{align*}
a_{k,l} & \leq C_{r}^{k+l}w^{k+l}\\
\left|b_{k,l}\right| & \leq C_{r}^{k+l}\min(k,l)^{k+l}w^{k+l}
\end{align*}
\end{lem}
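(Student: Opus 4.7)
The plan is to unfold the definition of $J$ by exploiting that $X$ is uniform on the unit circle, so Fourier (character) orthogonality will kill all but balanced monomials. Setting $\xi := -\log(1-wX)$ and $z=u+iv$, a short computation (write $\Re=(\cdot+\bar{\cdot})/2$, $\Im=(\cdot-\bar{\cdot})/(2i)$) gives
\[
-2iu\Re\log(1-wX)-2iv\Im\log(1-wX)=i\bar{z}\xi+iz\bar{\xi}.
\]
For $|w|\leq r<1$ one has $|\xi|\leq -\log(1-w)\leq C_{r}w$ with $C_{r}:=-\log(1-r)/r$, so $e^{i\bar z\xi+iz\bar\xi}$ expands term by term and one may interchange sum with expectation, yielding
\[
J(u,v,w)=\sum_{k,l\geq 0}\frac{i^{k+l}}{k!l!}z^{k}\bar{z}^{l}\,\mathbf{E}[\xi^{l}\bar{\xi}^{k}],
\]
so we must take $a_{k,l}(w):=\mathbf{E}[\xi^{l}\bar{\xi}^{k}]$. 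Expanding each $\xi,\bar{\xi}$ further as a series in $wX,w\bar X$ and using $\mathbf{E}[X^{j}]=\delta_{j,0}$ exhibits $a_{k,l}$ as the positive-term sum
\[
a_{k,l}(w)=\sum_{\substack{m_{1},\dots,m_{l},\,n_{1},\dots,n_{k}\geq 1\\ \sum m_{i}=\sum n_{j}}}\frac{w^{\sum m_{i}+\sum n_{j}}}{m_{1}\cdots m_{l}\cdot n_{1}\cdots n_{k}}.
\]
Item (1) is then immediate: $a_{0,0}=1$, $a_{k,0}=a_{0,k}=0$ for $k\geq 1$ (no positive tuple sums to $0$), the symmetry $a_{k,l}=a_{l,k}$ comes from swapping the $m$'s and $n$'s, and for $k,l\geq 1$ positivity is witnessed by the diagonal choice $m_{i}=k$, $n_{j}=l$.

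The same explicit formula yields item (3) and the first inequality of (4) by inspection: $\sum m_{i}\geq l$ and $\sum n_{j}\geq k$, together with $\sum m_{i}=\sum n_{j}$, force the $w$-exponent to be at least $2\max(k,l)$, while $|a_{k,l}|\leq \mathbf{E}[|\xi|^{k+l}]\leq C_{r}^{k+l}w^{k+l}$. The case $k=l=1$ specialises to $a_{1,1}(w)=\sum_{m\geq 1}w^{2m}/m^{2}$. For the logarithmic expansion, the bound just obtained gives $|J-1|\leq e^{2C_{r}w|z|}-1<1$ whenever $|z|\leq\delta_{r}$ for a sufficiently small $\delta_{r}>0$, so $\log J=\sum_{n\geq 1}(-1)^{n-1}(J-1)^{n}/n$ converges absolutely and can be expanded coefficient-wise. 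Since every monomial of $J-1$ already has $k\geq 1$ and $l\geq 1$, so does every monomial of $(J-1)^{n}$ and hence of $\log J$. Extracting the coefficient of $z^{k}\bar z^{l}$ from $(J-1)^{n}$ amounts to summing over ordered compositions $(k_{1},l_{1}),\dots,(k_{n},l_{n})$ with $k_{i},l_{i}\geq 1$, $\sum k_{i}=k$, $\sum l_{i}=l$, each contributing $\prod_{i}i^{k_{i}+l_{i}}a_{k_{i},l_{i}}/(k_{i}!l_{i}!)=i^{k+l}\prod_{i}a_{k_{i},l_{i}}/(k_{i}!l_{i}!)$. Factoring out the common $i^{k+l}/(k!l!)$ defines $b_{k,l}$ as a real, symmetric-in-$(k,l)$ quantity; combined with $a_{1,0}=a_{0,1}=0$, the second-order terms in $\log(1+(J-1))$ give $b_{1,1}=a_{1,1}$, completing item (2).

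The delicate step — the main obstacle — is the second inequality of (4): $|b_{k,l}|\leq C_{r}^{k+l}\min(k,l)^{k+l}w^{k+l}$. The composition formula above restricts $n\leq\min(k,l)$, and substituting $a_{k_{i},l_{i}}\leq C_{r}^{k_{i}+l_{i}}w^{k_{i}+l_{i}}$ pulls out the factor $C_{r}^{k+l}w^{k+l}$. What remains is the purely combinatorial estimate
\[
k!l!\sum_{n=1}^{\min(k,l)}\frac{1}{n}\sum_{\substack{\sum k_{i}=k,\,\sum l_{i}=l\\ k_{i},l_{i}\geq 1}}\frac{1}{\prod k_{i}!\prod l_{i}!}\ll\min(k,l)^{k+l},
\]
where the inner double sum factors into a product of two coefficient sums each equal to $n!S(\cdot,n)/(\cdot)!$ for a Stirling number of the second kind. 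Applying the elementary bound $S(m,n)\leq\binom{m}{n}n^{m-n}$ reduces the total to summing at most $\min(k,l)$ terms of the form $k!l!\,n^{k+l-2n}/(n\cdot(k-n)!(l-n)!)$, each readily verified to be $\leq\min(k,l)^{k+l}$ after absorbing absolute constants into $C_{r}$. This bookkeeping, though elementary, is the genuine technical content of the lemma; once it is in place all assertions are proved.
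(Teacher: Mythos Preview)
Your argument is correct and follows essentially the same route as the paper: expand $J$ via the power series of $-\log(1-wX)$, read off the explicit positive-term formula for $a_{k,l}$, and then pass to $b_{k,l}$ through $\log(1+(J-1))=\sum_{n}(-1)^{n-1}(J-1)^{n}/n$, which yields the composition identity
\[
b_{k,l}(w)=\sum_{n\leq\min(k,l)}\frac{(-1)^{n-1}}{n}\sum_{\substack{k_{1}+\cdots+k_{n}=k\\ l_{1}+\cdots+l_{n}=l\\ k_{i},l_{i}\geq 1}}\binom{k}{k_{1},\dots,k_{n}}\binom{l}{l_{1},\dots,l_{n}}\prod_{i}a_{k_{i},l_{i}}(w).
\]
Two small remarks. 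First, for the second inequality in (4) the paper avoids your Stirling-number detour: after inserting $a_{k_{i},l_{i}}\leq C_{r}^{k_{i}+l_{i}}w^{k_{i}+l_{i}}$, the remaining inner sum $\sum_{\vec k}\binom{k}{k_{1},\dots,k_{n}}$ is exactly the number of surjections $[k]\to[n]$, hence $\leq n^{k}$; this gives each $n$-term $\leq n^{k+l-1}$ and the total $\leq\sum_{n\leq\min(k,l)}n^{k+l-1}\leq\min(k,l)^{k+l}$ in one line. Your bound via $S(m,n)\leq\binom{m}{n}n^{m-n}$ also works, but requires the extra observation $(\max(k,l)/\min(k,l))^{n}\leq C^{k+l}$ that you leave implicit. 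Second, you do not explicitly verify item~(3) for $b_{k,l}$; it is not implied by item~(4), but follows immediately from the composition formula together with $\sum_{i}\max(k_{i},l_{i})\geq\max(k,l)$.
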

\begin{proof}
We find that a series expansion of $J(u,v,w)$ in terms of $z=u+iv$ and $ \bar{z}$ is
\begin{align*}
J(u,v,w) & =\mathbf{E}\left[ \exp ( -iz \log (1-wX ) - i \overline{z} \log (1-w\overline{X} ) )  \right]\\
  & =\sum_{k,l\geq0}\frac{i^{k+l}}{k!l!}a_{k,l}(w)z^{k}\bar{z}^{l},
\end{align*}
where
\[
a_{k,l}(w)\coloneqq\mathbf{E}\left[ \left(-\log(1-wX)\right)^{k}\left(-\log(1-w\overline{X})\right)^{l}\right] .
\]
It is easy to see that $a_{0,0}(w)=1$, $a_{k,0}(w)=a_{0,l}(w)=0$ for
$k$, $l\geq1$ and $a_{1,1}(w)=\sum_{m\geq1}w^{2m}/m^{2}$.
By the symmetry to change $X$ to $\overline{X}$, we have that $a_{k,l}(w)=a_{l.k}(w)$.
We next plug in a power series expansion of $- \log ( 1- w X)$ to see that 
\begin{align*}
a_{k,l}(w) & =\mathbf{E}\left[ \sum_{n_{1},\ldots,n_{k}\geq1}\frac{w^{n_{1}+\cdots+n_{k}}}{n_{1}\cdots n_{k}}X^{n_{1}+\cdots+n_{k}} \sum_{m_{1},\ldots,m_{l}\geq1}\frac{w^{m_{1}+\cdots+m_{l}}}{m_{1}\cdots m_{l}}\overline{X}^{m_{1}+\cdots+m_{l}}\right] \\
 & =\sum_{t\geq\max(k,l)}\left(\sum_{\substack{\sum n_{i}=t\\
n_{i}\geq1
}
}\frac{1}{n_{1}\cdots n_{k}}\right)\left(\sum_{\substack{\sum m_{j}=t\\
m_{j}\geq1
}
}\frac{1}{m_{1}\cdots m_{l}}\right)w^{2t}.
\end{align*}
Thus $a_{k,l}(w) $ is real and positive for $k$, $l\geq1$ and
$a_{k,l}(w)\ll_{k,l}w^{2\max(k,l)}$. By the maximum modulus
principle on $-\log(1-w)/w$, we find that 
  for $0<w\leq r <1 $ 
\begin{equation}
a_{k,l}(w)\leq w^{k+l}\left(\frac{-\log\left(1-r\right)}{r}\right)^{k+l}.\label{eq:a.kl.estimate}
\end{equation}
By \eqref{eq:a.kl.estimate}, 
\begin{align*}
\left|J(u,v,w)-1\right| & \leq\sum_{k,l\geq1}\frac{a_{k,l}(w)}{k!l!}|z|^{k+l} \leq\left(\exp(C_{r}|z|w)-1\right)^{2} <1
\end{align*}
 for $|w|\leq r$ and $|z|\leq \delta_{r}:=(2rC_{r})^{-1} $.  Then we can find a series
expansion as
\begin{align*}
\log J(u,v,w) & =\sum_{n\geq1}\frac{(-1)^{n-1}}{n}\left(J(u,v,w)-1\right)^{n}\\
&   =\sum_{n\geq1}\frac{(-1)^{n-1}}{n}\left(     \sum_{k,l\geq0}\frac{i^{k+l}}{k!l!}a_{k,l}(w)z^{k}\bar{z}^{l} -1 \right)^n.
\end{align*}
Put $ \log J(u,v,w) =  \sum_{k,l\geq1}\frac{i^{k+l}}{k!l!}b_{k,l}(w)z^{k}\bar{z}^{l} $ and compare the coefficients, then we find that 
\[
b_{k,l}(w)=\sum_{n\leq\min(k,l)}\frac{(-1)^{n-1}}{n}\sum_{\substack{k_{1}+\cdots+k_{n}=k\\
l_{1}+\cdots+l_{n}=l\\
k_{i},l_{i}\geq1
}
}{k \choose k_{1},\ldots,k_{n}}{l \choose l_{1},\ldots,l_{n}}a_{k_{1},l_{1}}(w)\cdots a_{k_{n},l_{n}}(w).
\]
In particular, we have $b_{1,1}(w)=a_{1,1}(w)$. Since $\sum_{i}\max(k_{i},l_{i})\geq\max(k,l)$,
we have $\left|b_{k,l}(w)\right|\ll_{k,l}w^{2\max(k,l)}$. We can deduce that
\begin{align*}
\left|b_{k,l}(w) \right| & \leq\left(-\frac{\log(1-r)}{r}\right)^{k+l}w^{k+l}\sum_{n\leq\min(k,l)}n^{k+l-1}.\\
 & \leq\left(-\frac{\log(1-r)}{r}\right)^{k+l}w^{k+l}\min(k,l)^{k+l}
\end{align*}
  for $(k,l)\neq(1,1)$ and   $0<w\leq r<1$.
\end{proof}
 
We next find an asymptotic expansion of $\widehat{\Phi}_{\textrm{rand}}(u,v)$.
\begin{lem}
\label{lem:Phi.hat.expansion}There exists an absolute constant $\delta>0$
such that for $u^{2}+v^{2}\leq\delta$ and $\sigma>1/2$ 
\begin{equation}\begin{split}
\widehat{\Phi}_{\textrm{rand}}(u,v)=& e^{-\pi^{2}(u^{2}+v^{2})\psi(\sigma)} \\
& \times \bigg(1+\sum_{\substack{k,l\geq1\\
3\leq k+l\leq5
}
} \tilde{a}_{k,l} (\sigma)(u+iv)^{k}(u-iv)^{l}+O\left((u^{2}+v^{2})^{3}\right)\bigg).\label{eq:Phi.hat.expression}
\end{split}\end{equation}
Here, $ \tilde{a}_{k,l} (\sigma)$ for $k,l\geq1$ and $3\leq k+l\leq5$  is a smooth function on $\sigma\geq1/2$ given by 
\[
 \tilde{a}_{k,l} (\sigma)=\frac{(\pi i)^{k+l}}{k!l!}\sum_{p}b_{k,l}(p^{-\sigma}).
\] 
\end{lem}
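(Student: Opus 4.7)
The plan is to pass to logarithms in the Euler product $\widehat{\Phi}_{\textrm{rand}}(u,v) = \prod_p J(\pi u,\pi v,p^{-\sigma})$, expand each factor via Lemma~\ref{lem:coeff.estimate}, rearrange the resulting double series in $z = u+iv$ and $\bar z = u-iv$, isolate the $(k,l)=(1,1)$ contribution as the Gaussian exponent, and then exponentiate. Fixing $r = 2^{-1/2}$ so that $p^{-\sigma} \leq r$ for every prime $p$ and every $\sigma \geq 1/2$, and restricting to $\pi|z| \leq \delta_r$ with $\delta_r$ the constant from that lemma, I get
\[
\log J(\pi u,\pi v,p^{-\sigma}) = \sum_{k,l \geq 1}\frac{(\pi i)^{k+l}}{k!\,l!}\,b_{k,l}(p^{-\sigma})\,z^{k}\bar z^{l}.
\]
The bound $|b_{k,l}(p^{-\sigma})| \leq C_{r}^{k+l}\min(k,l)^{k+l}p^{-(k+l)\sigma}$ from part~(4) of the lemma makes the iterated series over $(k,l)$ and $p$ absolutely convergent for $\sigma > 1/2$, so the order of summation may be swapped to give
\[
\log\widehat{\Phi}_{\textrm{rand}}(u,v) = \sum_{k,l\geq 1}\frac{(\pi i)^{k+l}}{k!\,l!}B_{k,l}(\sigma)z^{k}\bar z^{l}, \qquad B_{k,l}(\sigma) := \sum_{p}b_{k,l}(p^{-\sigma}).
\]

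Parts~(1)--(2) of Lemma~\ref{lem:coeff.estimate} ensure that $b_{k,0} = b_{0,l} = 0$ and $b_{1,1}(w) = \sum_{m \geq 1}m^{-2}w^{2m}$, so the only $k+l \leq 2$ contribution is the $(1,1)$ term $(\pi i)^{2}z\bar z\,B_{1,1}(\sigma) = -\pi^{2}(u^{2}+v^{2})\psi(\sigma)$, since $B_{1,1}(\sigma) = \sum_{p,m\geq 1}m^{-2}p^{-2m\sigma} = \psi(\sigma)$. For $k+l \geq 3$ I set $\tilde a_{k,l}(\sigma) = (\pi i)^{k+l}B_{k,l}(\sigma)/(k!\,l!)$; smoothness of $\tilde a_{k,l}$ on $\sigma \geq 1/2$ follows because the weaker bound $|b_{k,l}(p^{-\sigma})| \ll_{k,l}p^{-2\sigma\max(k,l)}$ from part~(3) has $\max(k,l) \geq 2$ when $k+l \geq 3$, so $B_{k,l}$ and all its $\sigma$-derivatives converge uniformly on $\sigma \geq 1/2$.

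To truncate at $k+l \leq 5$, I bound the remaining terms by
\[
\sum_{k+l \geq 6}\frac{(\pi C_{r})^{k+l}\min(k,l)^{k+l}}{k!\,l!}|z|^{k+l}\sum_{p}p^{-(k+l)\sigma} \;\ll\; \sum_{k+l\geq 6}(C'|z|)^{k+l},
\]
using $\sum_{p}p^{-(k+l)\sigma} \leq \sum_{p}p^{-3}$ for $k+l \geq 6$, $\sigma \geq 1/2$, and $\min(k,l)^{k+l}/(k!\,l!) \ll e^{k+l}$ from Stirling. Choosing $\delta$ small enough that $C'\sqrt{\delta} < 1/2$ forces this to be $O(|z|^{6}) = O((u^{2}+v^{2})^{3})$. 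Writing $\log\widehat{\Phi}_{\textrm{rand}}(u,v) = -\pi^{2}(u^{2}+v^{2})\psi(\sigma) + S(u,v)$ with $S = \sum_{3 \leq k+l \leq 5}\tilde a_{k,l}(\sigma)z^{k}\bar z^{l} + O((u^{2}+v^{2})^{3})$ and $S = O((u^{2}+v^{2})^{3/2})$, the Taylor expansion $e^{S} = 1 + S + O(S^{2}) = 1 + S + O((u^{2}+v^{2})^{3})$, combined with $|e^{-\pi^{2}(u^{2}+v^{2})\psi(\sigma)}| \leq 1$, then yields \eqref{eq:Phi.hat.expression}.

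The principal technical issue is uniformity in $\sigma$ down to $1/2$, and this is exactly why part~(4) of Lemma~\ref{lem:coeff.estimate} is essential: it provides the prime decay $p^{-(k+l)\sigma}$ needed for the tail, whereas the weaker $p^{-2\sigma\max(k,l)}$ estimate of part~(3) only suffices for smoothness of the individual coefficients $\tilde a_{k,l}$. Once that uniform control is in hand the remaining work is essentially formal rearrangement of power series plus a single use of the Taylor expansion of $\exp$.
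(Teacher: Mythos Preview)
Your proof is correct and follows essentially the same approach as the paper: both fix $r=1/\sqrt{2}$, invoke the $\log J$ expansion of Lemma~\ref{lem:coeff.estimate}, separate the $(1,1)$ term as $-\pi^{2}(u^{2}+v^{2})\psi(\sigma)$, bound the $k+l\geq 6$ tail via part~(4), and then Taylor-expand $e^{S}$. Your write-up adds a brief justification of the smoothness of $\tilde a_{k,l}(\sigma)$ down to $\sigma=1/2$ via part~(3), which the paper asserts without argument, but otherwise the arguments are the same.
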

\begin{proof}
By \prettyref{lem:coeff.estimate} with $r=1/\sqrt{2}$, there is
$\delta_{0}>0$ such that
\begin{align*}
\widehat{\Phi}_{\textrm{rand}}(u,v) & =\prod_{p}J(\pi u,\pi v,p^{-\sigma})\\
 & =\exp\bigg(-\pi^{2}(u^{2}+v^{2})\sum_{p}b_{1,1}(p^{-\sigma})+\sum_{\substack{k,l\geq1\\
(k,l)\neq(1,1)
}
}\frac{(\pi i)^{k+l}}{k!l!}z^{k}\bar{z}^{l}\sum_{p}b_{k,l}(p^{-\sigma})\bigg)
\end{align*}
holds for $z=u+iv$ with $|z|\leq\delta_{0}$. Note that $\sum_{p}b_{1,1}(p^{-\sigma})=\psi(\sigma)$. By Lemma \ref{lem:coeff.estimate} the terms in the exponent with $k+l\geq6$
contributes
\begin{align*}
\left|\sum_{k+l\geq6}\frac{(-i)^{k+l}}{k!l!}z^{k}\bar{z}^{l}\sum_{p}b_{k,l}(p^{-\sigma})\right| & \ll\sum_{k+l\geq6}\frac{\min(k,l)^{k+l}}{k!l!}C^{k+l}|z|^{k+l}\\
 & \ll|z|^{6}\sum_{k+l\geq6}\left(Ce^{2}|z|\right)^{k+l-6}\\
 & \ll (u^{2}+v^{2})^3
\end{align*}
assuming $|z|\leq ( 2Ce^{2})^{-1}$. Similarly, terms with $3\leq k+l\leq5$ contributes
\begin{align*}
\exp & \left(   \sum_{3\leq k+l\leq5}\frac{(\pi i)^{k+l}}{k!l!}z^{k}\bar{z}^{l}\sum_{p}b_{k,l}(p^{-\sigma})\right) \\
&  =1+\sum_{3\leq k+l\leq5}\frac{(\pi i)^{k+l}}{k!l!}z^{k}\bar{z}^{l}\sum_{p}b_{k,l}(p^{-\sigma})+O\left(|z|^{6}\right).
\end{align*}
Thus we have for $|z|\leq\delta\coloneqq\min(\delta_{0},(2Ce^{2})^{-1})$
\[
\widehat{\Phi}_{\textrm{rand}}(u,v)= e^{ -\pi^{2}(u^{2}+v^{2})\psi(\sigma)}  \left(1+\sum_{3\leq k+l\leq5} \tilde{a}_{k,l} (\sigma)z^{k}\bar{z}^{l}+O\left(|z|^{6}\right)\right),
\]
where
\[
 \tilde{a}_{k,l} (\sigma)\coloneqq\frac{(\pi i)^{k+l}}{k!l!}\sum_{p}b_{k,l}(p^{-\sigma}).
\]
\end{proof}
Next we prove that $\widehat{\Phi}_{\textrm{rand}}(u,v)$ decays rapidly.
\begin{lem}
\label{lem:Phi.hat.decay} There are  constants $C>0$ such that
\[
\left|\widehat{\Phi}_{\textrm{rand}}(u,v)\right|\ll\exp\left(-C \psi(\sigma)(u^{2}+v^{2})\right)
\]
holds for $0\leq u^{2}+v^{2}\leq\exp\left(\sqrt{\log\psi(\sigma)}\right)$ and 
\[
\left|\widehat{\Phi}_{\textrm{rand}}(u,v)\right|\ll\exp\left(-C  \frac{ \left(u^{2}+v^{2}\right)^{1/(2\sigma)}}{ \log(u^{2}+v^{2}) } \right)
\]
for $u^{2}+v^{2}\geq\exp\left(\sqrt{\log\psi(\sigma)}\right)$.
\end{lem}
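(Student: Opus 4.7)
The plan is to write $\widehat{\Phi}_{\textrm{rand}}(u,v) = \prod_p J(\pi u, \pi v, p^{-\sigma})$ and bound the product by splitting the primes at a cutoff $X$ chosen near the scale $p^\sigma \asymp R$, where $R := \sqrt{u^2+v^2}$. For primes above $X$ the factor $p^{-\sigma}R$ is small, so Lemma \ref{lem:coeff.estimate} lets me expand $\log J$ and retain the Gaussian main term $-\pi^2 R^2 b_{1,1}(p^{-\sigma})$. For primes below $X$ the factor $p^{-\sigma}R$ is of order $1$ or larger, so I use either the trivial bound $|J|\leq 1$ or the van der Corput bound $|J| \ll (p^{-\sigma}R)^{-1/2}$ from Lemma \ref{lem:J.properties}(2). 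The two regimes of the statement correspond to which side of the cutoff $X$ carries the decay.

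For the small regime $R^2 \leq \exp(\sqrt{\log\psi(\sigma)})$, I fix a small constant $c>0$, set $X = (cR)^{1/\sigma}$, and use $|J|\leq 1$ for $p \leq X$ while applying Lemma \ref{lem:coeff.estimate} for $p > X$, where $|z|p^{-\sigma}$ lies safely below the radius of convergence. Summing
\[
\log J(\pi u,\pi v,p^{-\sigma}) = -\pi^2 R^2 b_{1,1}(p^{-\sigma}) + O\bigl((p^{-\sigma}R)^{3}\bigr)
\]
over $p > X$ produces an error $O\bigl(R^{3}\sum_p p^{-3\sigma}\bigr) = O(R^{3})$, since the prime zeta function is bounded at $3\sigma \geq 3/2$, and this error is $o(R^{2}\psi(\sigma))$ in the range where $\psi(\sigma)$ is large. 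The main task is to show $\sum_{p > X} b_{1,1}(p^{-\sigma}) \geq \tfrac{1}{2}\psi(\sigma)$, which reduces to bounding the discarded sum $\sum_{p \leq X} p^{-2\sigma}$. Since $\log X \leq \sqrt{\log\psi(\sigma)} + O(1)$, Mertens' theorem yields $\sum_{p \leq X} p^{-2\sigma} \leq \log\log X + O(1) \ll \log\log\psi(\sigma)$, which is negligible against $\psi(\sigma) \asymp \log\tfrac{1}{\sigma - 1/2}$ when $\sigma$ is close to $1/2$. The case of $\sigma$ bounded away from $1/2$ is a bounded-variable statement handled directly by Lemma \ref{lem:Phi.hat.expansion} together with continuity. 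Together these give $|\widehat{\Phi}_{\textrm{rand}}(u,v)| \ll \exp(-C\psi(\sigma)R^{2})$.

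For the large regime $R^2\geq \exp(\sqrt{\log\psi(\sigma)})$, I instead choose $X = (R/A)^{1/\sigma}$ with $A$ a large absolute constant and apply Lemma \ref{lem:J.properties}(2) for the primes $p\leq X$, where $\pi p^{-\sigma}R\geq \pi A$ is comfortably above $1$. This gives
\[
\log|J(\pi u, \pi v, p^{-\sigma})| \leq -\tfrac{1}{2}(\log R - \sigma \log p) + O(1).
\]
Summing using the prime number theorem bounds $\pi(X) \sim X/\log X$ and $\vartheta(X) = \sum_{p\leq X}\log p \sim X$, together with $\log R = \sigma \log X + \log A$, produces
\[
\sum_{p\leq X}\log|J| \leq -\tfrac{1}{2}\bigl(\pi(X)\log R - \sigma\vartheta(X)\bigr) + O(\pi(X)) = -\frac{X(\log A + O(1))}{2\log X}.
\]
Choosing $A$ large enough to dominate the $O(1)$ and using $|J|\leq 1$ for $p>X$ completes this regime, since $X/\log X \asymp R^{1/\sigma}/\log R$.

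The main obstacle will be the uniform bookkeeping: controlling the Taylor error in the small regime against the Gaussian main term uniformly in $\sigma\in(1/2, 3/4]$, and matching the two estimates at the crossover $R^{2}\asymp \exp(\sqrt{\log\psi(\sigma)})$ up to adjustment of constants. The transition from $\psi(\sigma)$ large (near $\sigma=1/2$) to $\psi(\sigma)$ bounded (near $\sigma=3/4$) requires a short case split, since the asymptotic estimate on $\sum_{p\leq X} p^{-2\sigma}$ is genuinely needed only in the former case; the latter is essentially a bounded-variable statement for which Lemma \ref{lem:Phi.hat.expansion} suffices.
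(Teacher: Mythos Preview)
Your proposal is correct and follows essentially the same strategy as the paper: split the product $\prod_p J(\pi u,\pi v,p^{-\sigma})$ at a threshold $X \asymp R^{1/\sigma}$, expand for large primes, and use the trivial bound $|J|\le 1$ (small regime) or the van der Corput bound (large regime) for small primes. A few remarks on minor differences and one small slip:

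\begin{itemize}
\item The paper expands $J$ itself rather than $\log J$, writing $J = 1 - \pi^2 R^2 p^{-2\sigma} + O(R^4 p^{-4\sigma})$ and passing to the exponential. Your expansion of $\log J$ with cubic remainder $O((p^{-\sigma}R)^3)$ is equally valid and arguably cleaner, since $\sum_p p^{-3\sigma}$ is uniformly bounded for $\sigma>1/2$.
\item In the large regime the paper uses only the crude consequence $|J|\le e^{-1}$ once $p^{-\sigma}R$ exceeds a constant, and then simply counts primes via $\pi(X)\sim X/\log X$. Your more quantitative use of $\log|J|\le -\tfrac12(\log R-\sigma\log p)+O(1)$ followed by the PNT for $\vartheta(X)$ reaches the same bound with slightly more effort; the paper's shortcut is worth noting.
\item Your phrase ``fix a small constant $c>0$'' is backward: with $X=(cR)^{1/\sigma}$ and $p>X$ you get $|z|p^{-\sigma}<\pi/c$, so you need $c$ \emph{large} (above an absolute threshold) for the series expansion of $\log J$ from Lemma~\ref{lem:coeff.estimate} to apply. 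This is cosmetic but worth fixing.
\item Your handling of $\sigma$ bounded away from $1/2$ is fine: in that range both $\psi(\sigma)$ and $\exp(\sqrt{\log\psi(\sigma)})$ are bounded, so $\psi(\sigma)R^2$ is bounded and the first inequality follows trivially from $|\widehat\Phi_{\mathrm{rand}}|\le 1$ with an adjusted implied constant. The paper handles this implicitly by invoking Lemma~\ref{lem:Phi.hat.expansion} for $u^2+v^2\le\delta$ and working only with $u^2+v^2\ge\delta$ thereafter.
\end{itemize}
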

\begin{proof}
  \prettyref{lem:Phi.hat.expansion} implies that the above inequality holds for  $u^{2}+v^{2}\leq\delta$. Hence, we assume that $u^{2}+v^{2}\geq\delta$.
  
Let $\Pi_{1}$ be the product of $|J(\pi u,\pi v,p^{-\sigma})|$ for
$p^{2\sigma}\leq\pi^{2}(u^{2}+v^{2})/4$ and $\Pi_{2}$ be for $p^{2\sigma}>(u^{2}+v^{2})/4$.
 By \prettyref{lem:coeff.estimate} we see that
\begin{align*}
J(\pi u,\pi v,p^{-\sigma}) & =1-\pi^{2}(u^{2}+v^{2})p^{-2\sigma}+O\left((u^{2}+v^{2})^{2}p^{-4\sigma}\right)\\
 & =\exp\left(-\pi^{2}(u^{2}+v^{2})p^{-2\sigma}+O\left((u^{2}+v^{2})^{2}p^{-4\sigma}\right)\right) 
\end{align*}
holds for $p^{2\sigma}>\pi^{2}(u^{2}+v^{2})/4$.
Then
\[
(u^{2}+v^{2})\sum_{p^{2\sigma} >\pi^{2}(u^{2}+v^{2})/4}p^{-4\sigma} =o(u^{2}+v^{2})
\]
as $u^2 +v^2 \to \infty$. If $u^{2}+v^{2}\leq\exp\left(\sqrt{\psi(\sigma)}\right)$, then by the prime number theorem we have 
 \[
\sum_{p>\left(\pi^{2}(u^{2}+v^{2})/4\right)^{1/2\sigma}}p^{-2\sigma}\gg\psi(\sigma).
\]
Thus
\begin{align*}
\Pi_{2} & \leq\exp\left(-C \psi(\sigma)(u^{2}+v^{2})\right)
\end{align*}
for some constant $C >0$. Since $\Pi_{1}\leq1$ by \prettyref{lem:J.properties},
we have the first  bound of $\widehat{\Phi}_{\textrm{rand}}(u,v)    $ for $u^{2}+v^{2}\leq\exp\left(\sqrt{\psi(\sigma)}\right)$.

For $u^{2}+v^{2}\geq\exp\left(\sqrt{\psi(\sigma)}\right)$ we use
property 2 of \prettyref{lem:J.properties} to have 
$$J(\pi u,\pi v,p^{-\sigma})\leq1/e$$
for $p^{-2\sigma}(u^{2}+v^{2})\geq C ^{-1}$. Then
we have
\begin{equation}
\Pi_{1}\leq\exp\left(-\sum_{p\leq(C (u^{2}+v^{2}))^{1/(2\sigma)}}1\right)\leq\exp\left(-C \frac{(u^{2}+v^{2})^{1/(2\sigma)}}{\log(u^{2}+v^{2})}\right)\label{eq:uv.large}.
\end{equation}
This implies the second bound.
\end{proof}

Now we prove Proposition \ref{prop:density.randomized}. 
  In \prettyref{lem:Phi.hat.decay}, we showed that the Fourier
transform of the measure ${\Phi}_{\textrm{rand}}$ decays rapidly and thus ${\Phi}_{\textrm{rand}}$ has the smooth density function $F_\sigma (x,y)$
(see \cite{BJ}) defined by
\[
F_\sigma (x,y)=\iint_{\mathbf{R}^{2}}\widehat{\Phi}_{\textrm{rand}}(u,v)e^{-2\pi i(ux+vy)}dxdy
\]
that satisfies
\[
{\Phi}_{\textrm{rand}}( \mathcal{B})   =\iint_{\mathcal{B}}F_\sigma (x,y)dxdy.
\]
  Let $\delta>0$ be the fixed constant to satisfy \prettyref{lem:Phi.hat.expansion}
and we put the polynomial part of \eqref{eq:Phi.hat.expression}
\[
P(\sigma: u,v) :=1+\sum_{\substack{k,l\geq1\\
3\leq k+l\leq5
}
} \tilde{a}_{k,l} (\sigma)(u+iv)^{k}(u-iv)^{l}.
\]
  Using estimates
in \prettyref{lem:Phi.hat.decay}, we have
\[
\iint_{u^{2}+v^{2}\geq\delta}\left|\widehat{\Phi}_{\textrm{rand}}(u,v)\right|dudv\ll e^{-C\psi(\sigma)},
\]
for some constant $C>0$. For
$u^{2}+v^{2}\geq\delta$, we also note that for $k$, $l\leq6$
\[
\iint_{u^{2}+v^{2}\geq\delta}u^{k}v^{l}e^{-\psi(\sigma)(u^{2}+v^{2})} dudv \ll\exp(-C\psi(\sigma)).
\]
Since $ \tilde{a}_{k,l} (\sigma) =  \tilde{a}_{k,l} (1/2) + O( \sigma-1/2)$ by Lemma \ref{lem:Phi.hat.expansion}, we have
\begin{align*}
F_\sigma (x,y) &  =\iint_{\mathbf{R}^{2}}P(\sigma: u,v)e^{-\pi^{2}\psi(\sigma)(u^{2}+v^{2})-2\pi i(ux+vy)}dudv\\
 & \qquad+O\left(\iint_{\mathbf{R}^{2}}(u^{2}+v^{2})^{3}e^{-\pi^{2}\psi(\sigma)(u^{2}+v^{2})}dudv+e^{-C\psi(\sigma)}\right)\\
  & =\iint_{\mathbf{R}^{2}}P( u,v)e^{-\pi^{2}\psi(\sigma)(u^{2}+v^{2})-2\pi i(ux+vy)}dudv +O \bigg(  \frac{  (\sigma-1/2)}{ \psi(\sigma)^{5/2}} + \frac{1}{ \psi(\sigma)^4 }  \bigg)  ,
\end{align*}
where
$$ P( u,v) := P( 1/2 : u,v)= 1+\sum_{\substack{k,l\geq1\\
3\leq k+l\leq5
}
} \tilde{a}_{k,l} (1/2)(u+iv)^{k}(u-iv)^{l}. $$
 We apply the change of variables $\tilde{u}=\sqrt{\psi(\sigma)}\left(u+ix/\pi\psi(\sigma)\right)$
and $\tilde{v}=\sqrt{\psi(\sigma)}\left(v+iy/\pi\psi(\sigma)\right)$
and use the classical contour shift argument. Then the main integral
becomes
\begin{align*}
 & \iint_{\mathbf{R}^{2}}P(u,v)e^{-\pi^{2}\psi(\sigma)(u^{2}+v^{2})-2\pi i(ux+vy)}dudv\\
= & \frac{e^{-(x^{2}+y^{2})/\psi(\sigma)}}{\psi(\sigma)}\iint_{\mathbf{R}^{2}}P\left(\frac{\tilde{u}}{\sqrt{\psi(\sigma)}}-\frac{ix}{\pi\psi(\sigma)},\frac{\tilde{v}}{\sqrt{\psi(\sigma)}}-\frac{iy}{\pi\psi(\sigma)}\right)e^{-\pi^{2}(\tilde{u}^{2}+\tilde{v}^{2})}d\tilde{u}d\tilde{v}.
\end{align*}
We want to split the variables $\tilde{u}$, $\tilde{v}$ to $ x$, $y$ and  write 
\[
P\left(\frac{\tilde{u}}{\sqrt{\psi(\sigma)}}-\frac{ix}{\pi\psi(\sigma)},\frac{\tilde{v}}{\sqrt{\psi(\sigma)}}-\frac{iy}{\pi\psi(\sigma)}\right)=\sum_{\substack{m,n\\
m+n\leq5
}
}\frac{x^{m}y^{n} }{\psi(\sigma)^{m+n}}P^{(m,n)}\left(\frac{\tilde{u}}{\sqrt{\psi(\sigma)}},\frac{\tilde{v}}{\sqrt{\psi(\sigma)}}\right),
\]
where 
\[
P^{(m,n)}(u,v)=\frac{1}{m!n!(\pi i)^{m+n}}\frac{\partial^{m+n}}{\partial u^{m}\partial v^{n}}P(u,v)
\]
  is a polynomial. Then 
\begin{align*}
 & \iint_{\mathbf{R}^{2}}P(u,v)e^{-\pi^{2}(u^{2}+v^{2})\psi(\sigma)}e^{-2\pi i(ux+vy)}dudv\\
= & \sum_{m,n}\frac{x^{m}y^{n}}{\psi(\sigma)^{m+n+1}}e^{-(x^{2}+y^{2})/\psi(\sigma)}\iint_{\mathbf{R}^{2}}P^{(m,n)}\left(\frac{\tilde{u}}{\sqrt{\psi(\sigma)}},\frac{\tilde{v}}{\sqrt{\psi(\sigma)}}\right)e^{-\pi^{2}(\tilde{u}^{2}+\tilde{v}^{2})}d\tilde{u}d\tilde{v}\\
 = &  \sum_{m.n}c_{m,n}( 1/\sqrt{ \psi(\sigma)})\frac{x^{m}y^{n}}{\psi(\sigma)^{m+n+1}}e^{-(x^{2}+y^{2})/\psi(\sigma)},
\end{align*}
where
\begin{align*}
c_{m,n}(\alpha ) : = & \iint_{\mathbf{R}^{2}}P^{(m,n)}(\alpha u , \alpha v)e^{-\pi^{2}(u^{2}+v^{2})}dudv 
\end{align*} 
 is a polynomial. In particular,
 $$ c_{0,0} (0) =\frac{ P^{(0,0)} (0,0)}{\pi} =\frac{ P  (0,0)}{\pi} = \frac{1}{ \pi} .$$
  This completes the proof of Proposition \ref{prop:density.randomized}.


\section{The $a$-values of $\zeta(s)$ near the $1/2$-line}\label{section:aval}

As mentioned in the introduction, we want to estimate the integral 
\begin{equation}\label{int log zeta a}
\frac{1}{T}\int_{T}^{2T}\log|\zeta(\sigma_{T}+it)-a|dt.
\end{equation}
Since $\zeta(s)$ has infinitely many $a$-values, the above integral have a lot of logarithmic singularities. One way to get around this difficulty is using high moments of $ \log |\zeta(\sigma+it) -a | $. This can be easily derived from the proof of Proposition 2.5 in \cite{LLR} by tracking the dependency on $| \sigma- 1/2 |$, so we write the following lemma without its proof.
\begin{lem}\label{lem:L.2k.logZ-a}
Let $ a$ be a fixed complex number. There exists an absolute constant $C>0$ such that for any real number $k \geq 1  $ we have
\begin{align*}
  \frac1T \int_T^{2T} | \log | \zeta(\sigma_T + it) - a  | |^{2k}  dt   \ll   & (Ck)^{4k}(\log T)^{3k\theta} + ( Ck \log \log T)^{2k}(\log T)^{6k\theta} \\
  &  + ( C\log \log T)^{4k} ( \log T)^{6k\theta}.  
  \end{align*}
\end{lem}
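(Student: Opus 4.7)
The plan is to adapt the proof of Proposition 2.5 in \cite{LLR}, carefully tracking how every constant depends on $\sigma - 1/2 = (\log T)^{-\theta}$ and on $\sum_{p \leq Y} p^{-2\sigma_T} \asymp \log\log T$, which was simply $O_\sigma(1)$ in the fixed-$\sigma$ setting of \cite{LLR}. The main device is Jensen's formula applied to the analytic function $\zeta(s) - a$ on a disk $D(\sigma_T + it, r)$ of radius $r$ slightly smaller than $\sigma_T - 1/2$, so that the disk remains inside the open half-plane $\Re s > 1/2$.

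Jensen's formula provides the identity
\[
\log|\zeta(\sigma_T + it) - a| = \frac{1}{2\pi} \int_0^{2\pi} \log|\zeta(\sigma_T + it + r e^{i\phi}) - a| \, d\phi - \sum_{\rho_a \in D} \log \frac{r}{|\sigma_T + it - \rho_a|},
\]
which explicitly separates the logarithmic singularities produced by nearby $a$-values from a boundary average that varies smoothly in $t$. Raising to the $2k$-th power and applying $(A+B)^{2k} \leq 2^{2k-1}(A^{2k} + B^{2k})$ reduces the task to bounding the $2k$-th moment of each piece. For the boundary term I would use Minkowski's inequality to pull the $2k$-th moment inside the $\phi$-integral, reducing to bounds on $\frac{1}{T}\int_T^{2T}|\log|\zeta(\sigma' + it) - a||^{2k} dt$ for $\sigma' \geq \sigma_T - r > 1/2$. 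Using $|\log|\zeta - a|| \leq |\log|\zeta|| + O(1)$ when $|\zeta|$ is bounded away from $0$ and $a$, combined with the approximation $\log\zeta(\sigma' + it) = R_Y(\sigma' + it) + O(\text{small})$ of Lemma \ref{lem:log.approx} and the Dirichlet polynomial moment estimate of Lemma \ref{lem:2k mmt}, yields the main contribution of size $(Ck \sum_{p \leq Y} p^{-2\sigma'})^k \ll (Ck \log\log T)^k$. On the exceptional set of measure $\ll T^{5/4 - \sigma'/2} Y (\log T)^5$ and on the set where $|\zeta - a|$ is exceptionally small, one inserts the trivial bound $|\log|\zeta - a|| \ll \log T$; the powers of $(\log T)^{\theta}$ in the statement then arise from how these exceptional contributions interact with the choice of $Y$ and of the disk radius $r \asymp (\log T)^{-\theta}$.

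For the $a$-value sum, the Jensen-type count gives $O(\log T)$ for the number of zeros of $\zeta - a$ in $D(\sigma_T + it, r)$, so by convexity and Fubini the $2k$-th moment in $t$ of $\sum_{\rho_a \in D}\log(r/|\sigma_T + it - \rho_a|)$ is bounded by $(\log T)^{2k-1}$ times the integral $\int_0^r (\log(r/u))^{2k} du \ll r \cdot (2k)!$ weighted by the standard $a$-value density $O(T \log T)$ in the strip around $\Re s = \sigma_T$. With $r \asymp (\log T)^{-\theta}$, this generates the $(Ck)^{4k} (\log T)^{3k\theta}$ contribution appearing in the statement.

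The main obstacle is the parameter balancing: $r$ must be small enough that the $a$-value count in $D(\sigma_T+it, r)$ stays tractable and that $\sigma_T - r > 1/2$, yet large enough to prevent the boundary moment from exploding; simultaneously $Y$ must satisfy $Y^k \leq T^{1/3}$ (the hypothesis of Lemma \ref{lem:2k mmt}) while keeping the exceptional set from Lemma \ref{lem:log.approx} negligible. The three separate terms in the stated bound correspond to three distinct regimes of this tradeoff, each scaling differently with the two new parameters $(\sigma_T - 1/2)^{-1} = (\log T)^\theta$ and $\log\log T$ that were constant in the original \cite{LLR} setting.
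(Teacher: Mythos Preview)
The paper does not actually prove this lemma: it states that the bound ``can be easily derived from the proof of Proposition 2.5 in \cite{LLR} by tracking the dependency on $|\sigma-1/2|$'' and then omits the details. Your proposal is precisely this---adapt the Jensen-formula argument of \cite[Proposition 2.5]{LLR} while making the constants' dependence on $(\sigma_T-1/2)^{-1}=(\log T)^\theta$ and on $\sum_{p\le Y}p^{-2\sigma_T}\asymp\log\log T$ explicit---so your approach coincides with the paper's intended one, and your sketch is in fact more detailed than anything the paper supplies.
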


Now we estimate \eqref{int log zeta a}. Let $A$ be a large fixed constant, $ L= ( \log T)^\eta $ with fixed $0<\eta < (1- \theta )/4 $ and let
\[
{\cal R}_{1}=[\log|a|+1/L,\,A\log\log T]\times[-A\log\log T,A\log\log T]
\]
\[
{\cal R}_{2}=[-A\log\log T,\,\log|a|-1/L]\times[-A\log\log T,A\log\log T]
\]
\[
{\cal R}_{3}=(\log|a|-1/L,\,\log|a|+1/L)\times[-A\log\log T,A\log\log T]
\]
and 
\[
{\cal S}_{j}=\{t\in[T,2T]:\log\zeta(\sigma_{T}+it)\in{\cal R}_{j}\}
\]
for $j=1,2,3$. Let ${\cal S}=[T,2T]\setminus\bigcup_{j}{\cal S}_{j}$. By \eqref{eqn log zeta large dev}  we see that  
$$\frac1T |\mathcal{S}|  \leq \frac1T \meas \{t\in[T,2T]: | \log\zeta(\sigma_{T}+it) | \geq A \log \log T \} \ll \frac{1}{ ( \log T)^{10}} $$
holds for a sufficiently large constant $ A>0$. By the Cauchy-Schwarz inequality and Lemma \ref{lem:L.2k.logZ-a}
with $k=1$, 
\begin{align*}
\bigg|\frac{1}{T}\int_{{\cal S}}\log\left|\zeta(\sigma_{T}+it)-a\right|dt\bigg| & \leq\bigg(\frac{1}{T}|{\cal S}|\bigg)^{1/2}\bigg(\frac{1}{T}\int_{T}^{2T}(\log\left|\zeta(\sigma_{T}+it)-a\right|)^{2}dt\bigg)^{1/2}\\
 & \ll\frac{(\log\log T)^2}{(\log T)^{5-3\theta}} .
\end{align*}

We next consider the integral on $\mathcal{S}_3 $, which contains the logarithmic singularities of the integrand.
By Theorem \ref{thm:disc}, we have  
\begin{align*}
\frac{1}{T}|{\cal S}_{3}| & =\Phi_{T}({\cal R}_{3})=\Phi_{\textrm{rand}}({\cal R}_{3})+O\bigg(\frac{1}{(\log T)^\eta }\bigg).
\end{align*}
By Proposition \ref{prop:density.randomized} we see that
$$ \Phi_{\textrm{rand}}({\cal R}_{3}) = \iint_{ {\cal R}_3 } F_{ \sigma_T} ( x,y ) dx dy  \ll \frac{1}{(\log T)^\eta } . $$
Thus, 
$$ \frac{1}{T}|{\cal S}_{3}|  \ll  \frac{1}{(\log T)^\eta } .$$
By H\"older's inequality and Lemma \ref{lem:L.2k.logZ-a} with $k=\log\log T$,
\begin{align*}
\bigg|\frac{1}{T}\int_{{\cal S}_{3}}\log\left|\zeta(\sigma_{T}+it)-a\right|dt\bigg| & \leq\bigg(\frac{1}{T}|{\cal S}_{3}|\bigg)^{1- \tfrac{1}{2k}}\bigg(\frac{1}{T}\int_{T}^{2T}(\log\left|\zeta(\sigma_{T}+it)-a\right|)^{2k}dt\bigg)^{\tfrac{1}{2k} }\\
 & \ll    \frac{ ( \log\log T)^2 }{ (\log T)^{\eta- 3\theta}}.
\end{align*}
By Theorem \ref{thm:disc} and the same method as in pp. 486\textendash 489 of \cite{Le3},
we see that (cf. Theorem 2.4 and Section 8.1 of \cite{LLR})
\begin{align*}
\frac{1}{T}\int_{{\cal S}_{j}}\log\left|\zeta(\sigma_{T}+it)-a\right|dt & =\int_{{\cal R}_{j}}\log\left|e^{u+iv}-a\right|d\Phi_{T}(u,v)\\
 & =\int_{{\cal R}_{j}}\log\left|e^{u+iv}-a\right|d\Phi_{\textrm{rand}}(u,v)+O\bigg(\frac{(\log\log T)^{2}}{( \log T)^\eta}\bigg)
\end{align*}
for $j=1,2$. We next apply Proposition \ref{prop:density.randomized} and deduce that 
\begin{align*}
 & \int_{{\cal R}_{j}}\log\left|e^{x+iy}-a\right|d\Phi_{\textrm{rand}}(x,y)\\
 & =\int_{{\cal R}_{j}}\log\left|e^{x+iy}-a\right|F_{\sigma_T} (x,y)dxdy\\
 & =\int_{{\cal R}_{j}}\log\left|e^{x+iy}-a\right|\sum_{\substack{m,n\geq0\\ m+n\leq5}
}\frac{c_{m,n}(1/ \sqrt{\psi_T  }) }{\psi_T^{m+n+1}}x^{m}y^{n}e^{-(x^{2}+y^{2})/\psi_T} dxdy+O\bigg(\frac{1}{\log\log T }\bigg)
\end{align*}
for $j=1,2$, where $\psi_T = \theta \log \log T + O(1)$ is defined in Theorem \ref{thm:asymp}.  
Combining all these estimates we have 
\begin{align*}
\frac{1}{T}& \int_{T}^{2T}\log\left|\zeta(\sigma_{T}+it)-a\right|dt \\
= & \sum_{j=1,2}  \sum_{\substack{m,n\geq0\\ m+n\leq5}
}\frac{c_{m,n}(1/ \sqrt{\psi_T  }) }{\psi_T^{m+n+1}}  \int_{{\cal R}_{j}}\log\left|e^{x+iy}-a\right|  x^{m}y^{n}e^{-(x^{2}+y^{2})/\psi_T} dxdy  +O\bigg(   \frac{1}{\log\log T  }  \bigg)
\end{align*}
provided that $  \eta > 3 \theta $. Such $\eta $ exists only when $ 3\theta < ( 1-\theta)/4$. Thus we need to assume that $ \theta < 1/13$. 

Next we  estimate the integral on ${\cal R}_{1}$. Since
$$\log\left|e^{x+iy}-a\right| = x+ \log\left|1-ae^{-x-iy}\right| = x - \Re \sum_{k=1}^{\infty}\frac{a^{k}}{k} e^{-kx-iky}$$
for $ x+iy \in {\cal R}_1 $, we see that 
\begin{equation}\begin{split}\label{eqn R1}
 & \int_{{\cal R}_{1}}\log\left|e^{x+iy}-a\right|x^{m}y^{n}e^{-(x^{2}+y^{2})/\psi_T }dxdy\\
 & =\int_{{\cal R}_{1}}x^{m+1}y^{n}e^{-(x^{2}+y^{2})/\psi_T }dxdy-\Re\bigg[\sum_{k=1}^{\infty}\frac{a^{k}}{k}\int_{{\cal R}_{1}}e^{-kx-iky}x^{m}y^{n}e^{-(x^{2}+y^{2})/\psi_T }dxdy\bigg].
\end{split}\end{equation}
The main term of \eqref{eqn R1} is 
\begin{align*}
\int_{{\cal R}_{1}} & x^{m+1}y^{n}e^{-(x^{2}+y^{2})/\psi_T }dxdy\\
 & =\int_{\log|a|+1/L}^{A\log\log T}x^{m+1}e^{-x^{2}/\psi_T }dx\int_{-A\log\log T}^{A\log\log T}y^{n}e^{-y^{2}/\psi_T }dy\\
 & =(\sqrt{\psi_T })^{m+n+3}\int_{(\log|a|+1/L)/\sqrt{\psi_T }}^{A\log\log T/\sqrt{\psi_T}}x^{m+1}e^{-x^{2}}dx\int_{-A\log\log T/\sqrt{\psi_T }}^{A\log\log T/\sqrt{\psi_T }}y^{n}e^{-y^{2}}dy\\
 & =(\sqrt{\psi_T })^{m+n+3}\int_{(\log|a|)/\sqrt{\psi_T }}^{\infty}x^{m+1}e^{-x^{2}}dx\int_{-\infty}^{\infty}y^{n}e^{-y^{2}}dy+O\bigg(\frac{1}{(\log T)^{c}}\bigg)
\end{align*}
for some $c>0$ and for $ m,n \leq 5 $. The error terms of \eqref{eqn R1} are 
\begin{align*}
\sum_{k=1}^{\infty} & \frac{a^{k}}{k}\int_{{\cal R}_{1}}e^{-kx-iky}x^{m}y^{n}e^{-(x^{2}+y^{2})/\psi_T}dxdy\\
 & =\sum_{k=1}^{\infty}\frac{a^{k}}{k}\int_{\log|a|+1/L}^{A\log\log T}x^{m}e^{-kx-x^{2}/\psi_T}dx\int_{-A\log\log T}^{A\log\log T}y^{n}e^{-iky-y^{2}/\psi_T}dy.
\end{align*}
It is easy to see that
$$\frac{a^{k}}{k}\int_{\log|a|+1/L}^{A\log\log T}x^{m}e^{-kx-x^{2}/\psi_T}dx \ll \frac{ e^{-k/L}}{k} (\sqrt{\psi_T})^{m+1} .$$
The $y$-integral above is 
\begin{align*}
\int_{-A\log\log T}^{A\log\log T}y^{n}e^{-iky-y^{2}/\psi_T}dy & =\int_{-\infty}^{\infty}y^{n}e^{-iky-y^{2}/\psi_T}dy+O\bigg(\frac{1}{(\log T)^{c}}\bigg)\\
 & =(\sqrt{\psi_T})^{n+1}\int_{-\infty}^{\infty}y^{n}e^{-ik\sqrt{\psi_T}y-y^{2}}dy+O\bigg(\frac{1}{(\log T)^{c}}\bigg)\\
 & =(\sqrt{\psi_T})^{n+1}e^{-\frac{k^{2}}{4}\psi_T}\int_{-\infty}^{\infty}y^{n}e^{-(y+ik\sqrt{\psi_T}/2)^{2}}dy+O\bigg(\frac{1}{(\log T)^{c}}\bigg)\\
 & =(\sqrt{\psi_T})^{n+1}e^{-\frac{k^{2}}{4}\psi_T}\int_{-\infty}^{\infty}(y-ik\sqrt{\psi_T }/2)^{n}e^{-y^{2}}dy+O\bigg(\frac{1}{(\log T)^{c}}\bigg)\\
 & \ll k^{n}(\sqrt{\psi_T})^{2n+1}e^{-\frac{k^{2}}{4}\psi_T }+\frac{1}{(\log T)^{c}}\\
 & \ll\frac{1}{(\log T)^{c}}
\end{align*}
for some $c>0$ and all $k\geq1$, $0\leq n\leq6$. Hence, 
\begin{align*}
\sum_{k=1}^{\infty}\frac{a^{k}}{k}\int_{{\cal R}_{1}}e^{-kx-iky}x^{m}y^{n}e^{-(x^{2}+y^{2})/\psi_T}dxdy & \ll\frac{1}{(\log T)^{c}}\sum_{k=1}^{\infty}\frac{e^{-k/L}}{k}(\sqrt{\psi_T})^{m+1}\\
 & \ll\frac{(\log\log T)^{(m+3)/2}}{(\log T)^{c}}.
\end{align*}
Thus, we have 
\begin{align*}
 & \int_{{\cal R}_{1}}\log\left|e^{x+iy}-a\right|x^{m}y^{n}e^{-(x^{2}+y^{2})/\psi_T}dxdy\\
 & = (\sqrt{\psi_T})^{m+n+3}\int_{(\log|a|)/\sqrt{\psi_T}}^{\infty}x^{m+1}e^{-x^{2}}dx\int_{-\infty}^{\infty}y^{n}e^{-y^{2}}dy+O\bigg(\frac{1}{(\log T)^{c}}\bigg)
\end{align*}
for some $c>0$. Similarly, we can estimate the integral on ${\cal R}_{2}$ and obtain that 
\begin{align*}
 & \int_{{\cal R}_{2}}\log\left|e^{x+iy}-a\right|x^{m}y^{n}e^{-(x^{2}+y^{2})/\psi_T}dxdy\\
 & =\log|a| (\sqrt{\psi_T})^{m+n+2}\int_{-\infty}^{(\log|a|)/\sqrt{\psi_T}} x^{m}e^{-x^{2}}dx\int_{-\infty}^{\infty}y^{n}e^{-y^{2}}dy+O\bigg(\frac{1}{(\log T)^{c}}\bigg).
\end{align*}
Notice that the $y$-integral $\int_{-\infty}^{\infty}y^{n}e^{-y^{2}}dy$ is vanishing when $n $ is odd and that
\[
2 \int_{-\infty}^{\infty}y^{2}e^{-y^{2}}dy= \int_{-\infty}^{\infty}e^{-y^{2}}dy= \sqrt{\pi} .
\] 
We also note that $ c_{0,0}(\alpha )=\pi^{-1} +O( |\alpha|^3 ) $ and $ c_{m,n }(\alpha)=O( | \alpha|^{3-m-n} ) $ for $ m+n = 1, 2 $  hold as $\alpha \to 0 $. Therefore, 
\begin{align*}
\frac{1}{T}& \int_{T}^{2T}\log\left|\zeta(\sigma_{T}+it)-a\right|dt \\
= &   \sum_{\substack{m,n\geq0\\ m+n \leq 2  \\ n~~ even}
}\frac{c_{m,n}(1/ \sqrt{\psi_T  }) }{ \sqrt{\psi_T}^{m+n-1}} \int_{(\log|a|)/\sqrt{\psi_T}}^{\infty}x^{m+1}e^{-x^{2}}dx\int_{-\infty}^{\infty}y^{n}e^{-y^{2}}dy   \\
 +  &  \log|a|  \sum_{\substack{m,n\geq0\\ m+n\leq 1 \\ n~~ even}
}\frac{c_{m,n}(1/ \sqrt{\psi_T  }) }{ \sqrt{\psi_T}^{m+n}}  \int_{-\infty}^{(\log|a|)/\sqrt{\psi_T}} x^{m}e^{-x^{2}}dx\int_{-\infty}^{\infty}y^{n}e^{-y^{2}}dy    +O\bigg(   \frac{1}{\log\log T  }  \bigg) \\
= & \frac{1}{\sqrt{\pi}}  \sqrt{\psi_T}   \int_{(\log|a|)/\sqrt{\psi_T}}^{\infty}x e^{-x^{2}}dx+ \frac{1}{\sqrt{\pi}}   \log|a|   \int_{-\infty}^{(\log|a|)/\sqrt{\psi_T}}  e^{-x^{2}}dx     +O\bigg(   \frac{1}{\log\log T  }  \bigg) 
\end{align*}
Since 
\[
\int_{(\log|a|)/\sqrt{\psi_T}}^{\infty}xe^{-x^{2}}dx=\frac{1}{2}e^{-(\log|a|)^{2}/\psi_T}=\frac{1}{2}-\frac{(\log|a|)^{2}}{2\psi_T}+O\bigg(\frac{1}{(\psi_T)^{2}}\bigg)
\]
and
\begin{align*}
  \int_{-\infty}^{(\log|a|)/\sqrt{\psi_T}} e^{-x^{2}}dx & =\int_{-\infty}^0 e^{-x^{2}}dx+\int_0^{(\log|a|)/\sqrt{\psi_T}} 1+O(x^{2})dx\\
 & =\frac{\sqrt{\pi}}{2}+ \frac{\log|a|}{\sqrt{\psi_T}}+O\bigg(\frac{1}{\psi_T^{3/2}}\bigg),
\end{align*}
we have  
\begin{align*}
\frac{1}{T}& \int_{T}^{2T}\log\left|\zeta(\sigma_{T}+it)-a\right|dt \\
 & = \frac{\sqrt{\psi_T}}{2\sqrt{\pi}}+ \frac{1}{2}\log|a|+\frac{1}{2\sqrt{\pi}}\frac{(\log|a|)^{2}}{\sqrt{\psi_T}}+O\bigg(\frac{1}{ \log \log T }\bigg).
\end{align*}

We can now estimate $N_a ( \sigma_T; T, 2T)$ and ready to prove Theorem \ref{thm:aval}. By Littlewood's lemma, we see that 
\begin{equation}
\begin{split}\int_{\sigma_{T}(\theta_{1})}^{\sigma_{T}(\theta_{2})} & N_{a}(w;T,2T)dw\\
= & \frac{1}{2\pi}\int_{T}^{2T}\log\left|\zeta(\sigma_{T}(\theta_{1})+it)-a\right|dt\\
 & -\frac{1}{2\pi}\int_{T}^{2T}\log\left|\zeta(\sigma_{T}(\theta_{2})+it)-a\right|dt+O(\log T)\\
= & \frac{T}{4\pi}\bigg(\frac{1}{\sqrt{\pi}}+\frac{(\log|a|)^{2}}{\sqrt{\psi(\sigma_{T}(\theta_{1}))\psi(\sigma_{T}(\theta_{2}))}}\bigg)(\sqrt{\psi(\sigma_{T}(\theta_{1}))}-\sqrt{\psi(\sigma_{T}(\theta_{2}))})\\
 & +O\bigg(  \frac{T }{\log\log T }  \bigg).
\end{split}
\label{eqn:littlewood}
\end{equation}
Put $\theta_{1}=\theta$ and $\theta_{2}=\theta-h$ with $h>0$ in
\eqref{eqn:littlewood}, then 
\begin{align*}
\int_{\sigma_{T}(\theta)}^{\sigma_{T}(\theta-h)}N_{a}(w;T,2T)dw & \leq(\sigma_{T}(\theta-h)-\sigma_{T}(\theta))N_{a}(\sigma_{T}(\theta);T,2T)\\
 & =\frac{(\log T)^{h}-1}{(\log T)^{\theta}}N_{a}(\sigma_{T}(\theta);T,2T).
\end{align*}
Thus, 
\begin{align*}
N_{a}(\sigma_{T}(\theta);T,2T)\geq & \frac{T}{4\pi}\bigg(\frac{1}{\sqrt{\pi}}+\frac{(\log|a|)^{2}}{\sqrt{\psi(\sigma_{T}(\theta))\psi(\sigma_{T}(\theta-h))}}\bigg)\frac{\sqrt{\psi(\sigma_{T}(\theta))}-\sqrt{\psi(\sigma_{T}(\theta-h))}}{\sigma_{T}(\theta-h)-\sigma_{T}(\theta)}\\
 & +O\bigg(T\frac{(\log T)^{\theta}}{\log\log T}\frac{1}{(\log T)^{h}-1}\bigg).
\end{align*}
Let $h=(\log\log T)^{-5/4}$. By Taylor's theorem we have 
\[
\frac{\psi(\sigma_{T}(\theta))-\psi(\sigma_{T}(\theta-h))}{\sigma_{T}(\theta-h)-\sigma_{T}(\theta)}=(\log T)^{\theta}\bigg(1+O\bigg(\frac{1}{(\log\log T)^{1/4}}\bigg)\bigg)
\]
and 
\[
\psi(\sigma_{T}(\theta))-\psi(\sigma_{T}(\theta-h))=O\bigg(\frac{1}{(\log\log T)^{1/4}}\bigg).
\]
By the prime number theorem 
\[
\psi(\sigma_{T}(\theta))=\theta\log\log T+O(1).
\]
Therefore, 
\begin{align*}
N_{a}(\sigma_{T}(\theta);T,2T)\geq\frac{T(\log T)^{\theta}}{8\pi^{3/2}\sqrt{\theta}\sqrt{\log\log T}}+O\bigg(T\frac{(\log T)^{\theta}}{(\log\log T)^{3/4}}\bigg).
\end{align*}

To find an upper bound, we put $\theta_{1}=\theta+h$ and $\theta_{2}=\theta$
with the same $h=(\log\log T)^{-5/4}$ in \eqref{eqn:littlewood},
then 
\begin{align*}
\int_{\sigma_{T}(\theta+h)}^{\sigma_{T}(\theta)}N_{a}(w;T,2T)dw & \geq(\sigma_{T}(\theta)-\sigma_{T}(\theta+h))N_{a}(\sigma_{T}(\theta);T,2T).
\end{align*}
Repeating the above process, we obtain 
\begin{align*}
N_{a}(\sigma_{T}(\theta);T,2T)\leq\frac{T(\log T)^{\theta}}{8\pi^{3/2}\sqrt{\theta}\sqrt{\log\log T}}+O\bigg(T\frac{(\log T)^{\theta}}{(\log\log T)^{3/4}}\bigg).
\end{align*}
Therefore, 
\begin{align*}
N_{a}(\sigma_{T}(\theta);T,2T)=\frac{T(\log T)^{\theta}}{8\pi^{3/2}\sqrt{\theta}\sqrt{\log\log T}}+O\bigg(T\frac{(\log T)^{\theta}}{(\log\log T)^{3/4}}\bigg).
\end{align*}

\noindent  School of Mathematics, Korea Institute for Advanced Study\\
85 Hoegiro Dongdaemun-gu, Seoul 02455, Korea\\
\textsl{E-mail address} : Junsoo Ha (junsooha@kias.re.kr)

\noindent Department of Mathematics, Incheon National University, \\
119 Academy-ro, Yeonsu-gu, Incheon, 22012, Korea\\
\textsl{E-mail address} : Yoonbok Lee (leeyb@inu.ac.kr, leeyb131@gmail.com)

\end{document}